\numberwithin{equation}{section}
\newtheorem{Theorem}{Theorem}[section]
\newtheorem{Corollary}[Theorem]{Corollary}
\newtheorem{Lemma}[Theorem]{Lemma}
\newtheorem{Proposition}[Theorem]{Proposition}
\newtheorem{Conjecture}[Theorem]{Conjecture}
{ \theoremstyle{definition}
\newtheorem{Remark}[Theorem]{Remark}
\newtheorem{Example}[Theorem]{Example}
}
\newcommand{\bR}{\mathbb{R}}
\newcommand{\bQ}{\mathbb{Q}}
\newcommand{\mf}[1]{\mathfrak{#1}}
\newcommand{\mcal}[1]{\mathcal{#1}}
\begin{document}

\newcommand{\arXivNumber}{1803.01121}

\renewcommand{\thefootnote}{}

\renewcommand{\PaperNumber}{053}

\FirstPageHeading

\ShortArticleName{A Spin Analogue of Kerov Polynomials}

\ArticleName{A Spin Analogue of Kerov Polynomials\footnote{This paper is a~contribution to the Special Issue on the Representation Theory of the Symmetric Groups and Related Topics. The full collection is available at \href{https://www.emis.de/journals/SIGMA/symmetric-groups-2018.html}{https://www.emis.de/journals/SIGMA/symmetric-groups-2018.html}}}

\Author{Sho MATSUMOTO}

\AuthorNameForHeading{S.~Matsumoto}

\Address{Graduate School of Science and Engineering, Kagoshima University,\\
Kagoshima 890-0065, Japan}
\Email{\href{mailto:shom@sci.kagoshima-u.ac.jp}{shom@sci.kagoshima-u.ac.jp}}

\ArticleDates{Received March 13, 2018, in final form May 29, 2018; Published online June 02, 2018}

\Abstract{Kerov polynomials describe normalized irreducible characters of the symmetric groups in terms of the free cumulants associated with Young diagrams. We suggest well-suited counterparts of the Kerov polynomials in spin (or projective) representation settings. We show that spin analogues of irreducible characters are polynomials in even free cumulants associated with double diagrams of strict partitions. Moreover, we present a conjecture for the positivity of their coefficients.}

\Keywords{Kerov polynomials; spin symmetric groups; free cumulants; characters}

\Classification{05E10; 20C30; 05E05}

\renewcommand{\thefootnote}{\arabic{footnote}}
\setcounter{footnote}{0}

\section{Introduction}

\subsection{Characters of symmetric groups}

Irreducible representations of symmetric groups $S_n$ are indexed by partitions $\lambda$ of~$n$, or equivalently by Young diagrams of size~$n$. The corresponding character $\chi^\lambda$ takes values at conjugacy classes $C_\nu$ in $S_n$, which are also indexed by partitions~$\nu$ of~$n$. The character values $\chi^\lambda_\nu=\chi^\lambda(C_\nu)$ have been studied for a long time by innumerable researchers, see, e.g.,~\cite{Sagan}. Recently, for se\-veral problems in the asymptotic representation theory, we often deal with the $\chi^\lambda_\nu$, fixing $\nu$ and letting~$\lambda$ vary. More precisely, for a partition~$\nu$ of~$k$ and a partition $\lambda$ of~$n$ with $k \le n$, we define
\begin{gather} \label{eq:normalized_ch}
\mathrm{Ch}_\nu(\lambda)= n(n-1) \cdots (n-k+1) \frac{\chi^\lambda_{\nu \cup (1^{n-k})}}{f^\lambda},
\end{gather}
where $f^\lambda :=\chi^\lambda_{(1^{|\lambda|})}$ denotes the dimension of the irreducible representation of the symmetric group $S_{|\lambda|}$ associated with~$\lambda$. We set $\mathrm{Ch}_\nu(\lambda)=0$ whenever $k>n$.
Then the $\mathrm{Ch}_\nu$ are functions on all Young diagrams. An approach from this point of view is sometimes referred to as \emph{dual approach} or \emph{dual combinatorics} to characters of symmetric groups \cite{DolegaFeraySniady2014, FeraySniady2011b}.

\subsection{Kerov polynomials}

Biane and Kerov discovered that free cumulants play an important role in the asymptotic representation theory of symmetric groups \cite{Biane1998, Biane2003}. Here the free cumulants $R_j(\lambda)$ $(j=1,2,\dots)$ of the transition measure of a Young diagram $\lambda$ are sequences of real numbers defined in the framework of the free probability theory. (Note that $R_1 \equiv 0$.) Then the normalized character $\mathrm{Ch}_{k}:=\mathrm{Ch}_{(k)}$ for one-row partition $\nu=(k)$ can be expressed as a \emph{polynomial} in $R_2,\dots, R_{k+1}$ with integer coefficients.

\begin{Theorem}[Kerov's character formula \cite{Biane2003}] For each $k=1,2,3,\dots$, there exists a~polynomial~$K_k$ in $k$ variables with integer coefficients, such that
\begin{gather*}
\mathrm{Ch}_{k}(\lambda)= K_k(R_2(\lambda),R_3(\lambda), \dots, R_{k+1}(\lambda))
\end{gather*}
for all partitions $\lambda$. Furthermore, $K_k$ is of the form
\begin{gather*}
K_k(R_2,\dots,R_{k+1})= R_{k+1}+ (\text{a polynomial in $R_2,\dots, R_{k-1}$}).
\end{gather*}
\end{Theorem}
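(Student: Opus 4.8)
The plan is to work inside the algebra $\Lambda^\star$ of polynomial functions on Young diagrams and to exploit two gradations on it. First I would recall that the normalized characters $\mathrm{Ch}_\nu$ and the free cumulants $R_j$ all belong to $\Lambda^\star$, and that the family $R_2, R_3, \dots$ is an algebraically independent set of generators of $\Lambda^\star$; this is Biane's description of $\Lambda^\star$ via free cumulants \cite{Biane2003}. Since $\mathrm{Ch}_k \in \Lambda^\star$, it is automatically \emph{some} polynomial in the $R_j$, so the real content of the statement is to control which $R_j$ occur and with which coefficients.

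Second, I would introduce the gradation defined by $\deg R_j = j$ and show that $\mathrm{Ch}_k$ has top degree $k+1$, with top-degree component equal to $R_{k+1}$. This is the essential asymptotic input: under dilation $\lambda \mapsto D\lambda$ the normalized character grows like $D^{k+1} R_{k+1}(\lambda)$, which is precisely Biane's identification of the leading term of the character with the highest free cumulant, obtained from the transition measure of $\lambda$ and the linearization of free convolution by the $R$-transform. Combined with the generating property, this already yields $\mathrm{Ch}_k = R_{k+1} + (\text{a polynomial in } R_2, \dots, R_k)$ of degree strictly below $k+1$.

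Third, and this is what removes $R_k$, I would invoke the conjugation symmetry $\lambda \mapsto \lambda'$. On one hand $\chi^{\lambda'}_\nu = \mathrm{sgn}(\nu)\,\chi^\lambda_\nu$, so for a $k$-cycle $\mathrm{Ch}_k(\lambda') = (-1)^{k-1}\mathrm{Ch}_k(\lambda) = (-1)^{k+1}\mathrm{Ch}_k(\lambda)$. On the other hand the transition measure of $\lambda'$ is the reflection $x \mapsto -x$ of that of $\lambda$, whence $R_j(\lambda') = (-1)^j R_j(\lambda)$. Substituting into $\mathrm{Ch}_k = K_k(R_2, R_3, \dots)$ produces the functional identity $K_k(R_2, -R_3, R_4, -R_5, \dots) = (-1)^{k+1} K_k(R_2, R_3, R_4, \dots)$, which forces every monomial of $K_k$ to satisfy $\deg \equiv k+1 \pmod 2$. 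Since $R_{k+1}$ has this parity while $R_k$ does not, the term $R_k$ cannot appear, and the subleading terms involve only $R_2, \dots, R_{k-1}$, as claimed.

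The main obstacle I expect is the precise identification of the top-degree term as $R_{k+1}$ rather than merely ``something of degree $k+1$'': this requires the genuine free-probabilistic analysis of the transition measure, whereas the polynomiality, the degree bound, and the parity are comparatively formal once that input is in hand. A second delicate point is the \emph{integrality} of the coefficients; I would obtain it by triangularity, inverting the relation $\mathrm{Ch}_k = R_{k+1} + (\text{lower terms})$ to express each $R_{k+1}$ as an integer polynomial in the $\mathrm{Ch}_j$, and then combining the known integrality of the normalized characters with an induction on $k$.
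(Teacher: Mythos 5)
The paper does not prove this theorem: it is quoted verbatim from Biane \cite{Biane2003} as background, so there is no internal proof to compare against. Judged on its own, your outline follows the standard (essentially Biane's) route, and its structural parts are sound: $\mathrm{Ch}_k$ lies in the algebra of polynomial functions on Young diagrams, which the algebraically independent family $R_2,R_3,\dots$ generates, so $\mathrm{Ch}_k$ is \emph{some} polynomial $K_k$ in the free cumulants; the dilation asymptotics identify the top-degree component (for $\deg R_j=j$) as exactly $R_{k+1}$; and the conjugation symmetry $\mathrm{Ch}_k(\lambda')=(-1)^{k-1}\mathrm{Ch}_k(\lambda)$, $R_j(\lambda')=(-1)^jR_j(\lambda)$, forces every monomial of $K_k$ to have degree congruent to $k+1$ modulo $2$, so the remainder has degree at most $k-1$ and hence cannot contain $R_k$ or $R_{k+1}$. (To upgrade the pointwise parity identity to a polynomial identity you do need the algebraic independence of the $R_j$ as functions on diagrams, but you have already invoked that.)

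The genuine gap is the integrality of the coefficients. At the point where you ``invert the relation $\mathrm{Ch}_k=R_{k+1}+(\text{lower terms})$,'' all you know is that the lower terms have \emph{rational} coefficients, so the inversion only yields $R_{k+1}$ as a rational polynomial in $\mathrm{Ch}_1,\dots,\mathrm{Ch}_k$; no induction can manufacture integrality out of that. Nor does the integer-valuedness of the $\mathrm{Ch}_\nu$ on Young diagrams help by itself: a $\mathbb{Q}$-polynomial in integer-valued, integrally independent functions can be integer-valued without having integer coefficients (compare $\binom{x}{2}=\tfrac{1}{2}\big(x^2-x\big)$). You need an independent integrality input, for instance that each $R_{j+1}$ lies in the $\mathbb{Z}$-span of the products $\mathrm{Ch}_{\mu_1}\mathrm{Ch}_{\mu_2}\cdots$, together with the fact that this $\mathbb{Z}$-span is a ring (integer structure constants of the Ivanov--Kerov algebra of partial permutations); only then does the unitriangular change of variables $\mathrm{Ch}_j\mapsto R_{j+1}$ have an integer inverse. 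Alternatively one can extract integrality from the explicit coefficient-extraction formula of Okounkov--Biane relating $\mathrm{Ch}_k$ and $G_{\mathfrak{m}_\lambda}(z)^{-k}$, but either way this step requires an argument that your sketch currently does not supply.
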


\begin{Example}[\cite{Biane2003}] \label{ex:KerovPoly}
\begin{gather*}
K_1 =R_2, \\
K_2 =R_3, \\
K_3 =R_4+R_2, \\
K_4 =R_5+5R_3, \\
K_5 =R_6+15R_4+ 5 R_2^2 + 8R_2, \\
K_7 = R_8 + 70 R_6+ 84 R_4 R_2+ 56 R_3^2
+14 R_2^3 + 469 R_4+ 224 R_2^2+ 180 R_2, \\
K_9= R_{10}+210 R_8+300 R_6 R_2 +480 R_5 R_3+270 R_4^2 + 360 R_3^2 R_2 +270 R_4 R_2^2+ 30 R_2^4 \\
\hphantom{K_9=}{} + 5985 R_6+ 10548 R_4 R_2 +6714 R_3^2+2400 R_2^3 +26060 R_4 +
14580 R_2^2 +8064 R_2.
\end{gather*}
Here we omit $K_6$ and $K_8$ because we will compare odd-numbered polynomials with Example~\ref{ex:SpinKerov} below. We can see a complete list of $K_k$ up to $k=11$ in~\cite{Biane2003}.
\end{Example}

The polynomials $K_k$ in the theorem are called \emph{Kerov polynomials}. We can observe that all coefficients in the above examples are \emph{nonnegative} integers. This surprising phenomenon is very nontrivial and had been called the Kerov (positivity) conjecture. It was finally proved by F\'{e}ray~\cite{Feray2009}. The authors in~\cite{DolegaFeraySniady2010} obtained clearer combinatorial interpretations.

\begin{Theorem}[F\'{e}ray \cite{Feray2009}] \label{thm:PositivityFeray} All coefficients of $K_k$ are nonnegative integers.
\end{Theorem}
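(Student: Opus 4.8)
Since Kerov's character formula (the first theorem above) already guarantees that $K_k$ has integer coefficients, it suffices to prove that each of these coefficients is nonnegative. The plan is to give every coefficient a combinatorial meaning as the cardinality of an explicitly described finite set, which makes nonnegativity automatic. To do this I would work inside the algebra of polynomial functions on Young diagrams and exploit Stanley's multirectangular coordinates: a generalized diagram is encoded by two sequences $\mathbf{p}=(p_1,\dots,p_r)$ and $\mathbf{q}=(q_1,\dots,q_r)$, and both the normalized characters $\mathrm{Ch}_k$ and the free cumulants $R_j$ become concrete polynomials in the $p_i$ and $q_i$.

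First I would recall (or re-derive) the Stanley--F\'{e}ray formula, which expresses $\mathrm{Ch}_k$ on the diagram $\mathbf{p}\times\mathbf{q}$ as a signed sum over pairs of permutations $(\sigma_1,\sigma_2)$ whose product $\sigma_1\sigma_2$ is a fixed $k$-cycle, each term being a monomial in the $p_i,q_i$ governed by the cycle structure of $\sigma_1$ and $\sigma_2$. In parallel I would express every free cumulant $R_{j+1}$ in the same coordinates; using Biane's description of the transition measure through non-crossing partitions, $R_{j+1}$ acquires an explicit polynomial form, and the change of variables between the $R_j$ and a suitable top-degree part of the coordinates is triangular, hence invertible.

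Next I would extract the coefficient of a fixed monomial $R_2^{s_2}R_3^{s_3}\cdots R_{k+1}^{s_{k+1}}$ in $K_k$. Differentiating the Stanley--F\'{e}ray expression with respect to the free-cumulant variables and specializing at a convenient point, the triangularity reduces this coefficient to a signed count of factorizations $\sigma_1\sigma_2=(\text{$k$-cycle})$ subject to constraints: the cycle type of $\sigma_2$ must match the exponents $(s_j)$, and a genus/connectivity condition must hold. The remaining, and I expect the hardest, step is that this count is a priori signed and involves substantial cancellation.

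The main obstacle is therefore to prove that the net signed count is nonnegative. The plan to overcome it is to construct a sign-reversing involution on the indexing set of factorizations that pairs off and cancels all terms of negative sign, so that the surviving terms are all positive and are in bijection with a family of genuine combinatorial objects---certain bipartite maps embedded on an oriented surface of controlled genus, with prescribed face and vertex data encoding $k$ and the $(s_j)$. Once each coefficient is realized as the number of such maps, nonnegativity is immediate, and together with the integrality already supplied by Kerov's formula this proves the theorem.
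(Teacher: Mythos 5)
First, a point of comparison: the paper does not prove this statement at all. It is quoted as F\'{e}ray's theorem and used purely as imported background (the purpose of the paper is to conjecture a spin analogue of it), so there is no internal proof to measure your argument against; your proposal has to stand on its own as a reconstruction of the proof in \cite{Feray2009} or \cite{DolegaFeraySniady2010}.

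Your setup is the correct one --- Stanley's multirectangular coordinates, the Stanley--F\'{e}ray signed sum over factorizations $\sigma_1\sigma_2=(1,2,\dots,k)$, the polynomial expression of the $R_j$ in those coordinates, and the triangular (hence invertible) change of variables --- and this is indeed the framework of the known proofs. However, there is a genuine gap at exactly the point where the theorem lives: you defer the entire difficulty to ``construct a sign-reversing involution that pairs off and cancels all terms of negative sign,'' and no such involution is described, even in outline. Controlling these cancellations is why the Kerov conjecture stayed open for years after Biane's and Stanley's formulas were available; everything in your proposal before that step was already known or conjectured before F\'{e}ray's proof. Moreover, neither F\'{e}ray's original argument nor the Do{\l}\c{e}ga--F\'{e}ray--\'Sniady proof proceeds by a sign-reversing involution on the set of factorizations: there the coefficient of $R_2^{s_2}R_3^{s_3}\cdots$ is extracted via iterated derivatives of $\mathrm{Ch}_k$ with respect to the free cumulants, and positivity emerges from an algebraic inclusion--exclusion analysis that ultimately identifies the coefficient with the number of factorizations satisfying transitivity and marking conditions --- not from bijectively pairing off negative terms. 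So the crucial step is not only missing but is asserted in a form that is not known to be realizable; as written, the proposal establishes integrality (which Kerov's character formula already gives) and reduces positivity to an unproven combinatorial claim essentially equivalent to the theorem itself.
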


Recently, a generalization of Kerov polynomials involving Jack polynomials has been actively studied in \cite{DolegaFeray2016, Lassalle, Sniady2016}. In this paper, we work on research in another direction. Our aim here is to present an answer to the following question: \emph{What is the counterpart of the Kerov polynomials in the spin representation setting?}

\subsection{Spin representations}

Spin (or projective) representation theory of symmetric groups was introduced by Schur. Currently, it can be understood via a double covering of the symmetric group or via the Hecke--Clifford algebra, see, e.g.,~\cite{WanWang2011}. In the spin case, the corresponding character values $X^\lambda_\nu$ are indexed by strict partitions $\lambda$ and odd partitions~$\nu$. For a fixed odd partition~$\nu$, we can define a spin version~$\mathfrak{p}_\nu$ of~\eqref{eq:normalized_ch}, which is a function on the set of all strict partitions. The function~$\mathfrak{p}_\nu$ was first introduced by Ivanov~\cite{Ivanov2004, Ivanov2006} (with notation $p_\nu^\#$) and developed in author's recent work~\cite{M}.

\subsection{Results}

To define a spin analogue of Kerov polynomials, we need to find a spin analogue of free cumu\-lants~$R_j(\lambda)$. Furthermore, it is natural to expect that the same statement of Theorem~\ref{thm:PositivityFeray} holds in the spin setting. The aim in this paper is to establish a spin analogue of Kerov polynomials by using a~spin analogue $\mathfrak{R}_{2j}(\lambda)$ of $R_j$, which is defined as the \emph{half} of \emph{even} free cumulants of the \emph{double diagram} of a strict partition $\lambda$. Indeed, we obtain the following theorem for $\mathfrak{p}_{2k-1}$ with one-row odd partition $\nu=(2k-1)$.

\begin{Theorem} \label{thm:MainThm} For each $k=1,2,3,\dots$, there exists a polynomial $K_{2k-1}^{\mathrm{spin}}$ in $k$ variables with rational coefficients, such that
\begin{gather*}
\mathfrak{p}_{2k-1}(\lambda)= K_{2k-1}^{\mathrm{spin}} (\mathfrak{R}_2(\lambda), \mathfrak{R}_4(\lambda),\dots, \mathfrak{R}_{2k}(\lambda))
\end{gather*}
for all strict partitions $\lambda$. Furthermore, $K_{2k-1}^{\mathrm{spin}}$ is of the form
\begin{gather*}
K_{2k-1}^{\mathrm{spin}}= \mathfrak{R}_{2k} + (\text{a polynomial in $\mathfrak{R}_2, \mathfrak{R}_4,\dots, \mathfrak{R}_{2k-2}$ of degree at most $2k-2$}).
\end{gather*}
\end{Theorem}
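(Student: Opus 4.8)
The plan is to reduce the statement to the classical Kerov character formula \cite{Biane2003} by transporting everything to the double diagram. Let $D(\lambda)$ denote the double diagram of a strict partition $\lambda$, so that by definition $\mathfrak{R}_{2j}(\lambda) = \tfrac{1}{2} R_{2j}(D(\lambda))$. The first step is to record the bridge between the two settings: a connection formula expressing the spin function $\mathfrak{p}_{2k-1}$ through the ordinary normalized characters $\mathrm{Ch}$ evaluated on $D(\lambda)$. I expect it to take the form
\begin{gather*}
\mathfrak{p}_{2k-1}(\lambda) = \tfrac{1}{2}\, \mathrm{Ch}_{2k-1}(D(\lambda)) + \sum_{1 \le j < k} c_{k,j}\, \mathrm{Ch}_{2j-1}(D(\lambda)),
\end{gather*}
with a fixed nonzero leading scalar and lower corrections indexed by smaller odd parts; such a relation is available from the work of Ivanov \cite{Ivanov2004, Ivanov2006} and from \cite{M}, and it is its leading term that will fix the normalization of $\mathfrak{R}_{2k}$.

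Second, I would apply Kerov's character formula to expand each $\mathrm{Ch}_{2j-1}(D(\lambda))$ as the polynomial $K_{2j-1}$ in the free cumulants $R_2(D(\lambda)), \dots, R_{2j}(D(\lambda))$, with integer coefficients and leading term $R_{2j}(D(\lambda))$. The decisive structural input is then the \emph{vanishing of the odd free cumulants} of the double diagram: since $D(\lambda)$ is self-conjugate, its transition measure is symmetric about the origin, whence $R_{2i+1}(D(\lambda)) = 0$ for every $i$. Consequently every monomial in the $K_{2j-1}$ that involves an odd-index cumulant (for example the $R_3^2$ occurring in $K_7$) simply drops out, and only the even-cumulant monomials survive.

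Third, I would substitute $R_{2j}(D(\lambda)) = 2\, \mathfrak{R}_{2j}(\lambda)$ into the surviving even-cumulant expressions. This exhibits $\mathfrak{p}_{2k-1}(\lambda)$ as a polynomial with rational coefficients in the $k$ variables $\mathfrak{R}_2(\lambda), \dots, \mathfrak{R}_{2k}(\lambda)$, valid for all strict partitions $\lambda$, and one defines $K_{2k-1}^{\mathrm{spin}}$ to be this polynomial. For the leading term I would grade by $\deg \mathfrak{R}_{2j} = 2j$. The unique degree-$2k$ contribution comes from the top term $R_{2k}(D(\lambda)) = 2\, \mathfrak{R}_{2k}(\lambda)$ inside $\tfrac{1}{2}\, \mathrm{Ch}_{2k-1}(D(\lambda))$, producing coefficient $1$ on $\mathfrak{R}_{2k}$, while every remaining contribution — whether from the corrections $\mathrm{Ch}_{2j-1}(D(\lambda))$ with $j < k$ or from the lower part of $K_{2k-1}$ — has degree at most $2k-2$. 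This is precisely the asserted form.

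The main obstacle I anticipate is establishing the connection formula of the first step together with its precise leading scalar, and checking that none of the lower corrections contaminates the degree-$2k$ part. Once self-conjugacy of $D(\lambda)$ is in place the vanishing of odd cumulants is clean, and the final substitution is routine; but pinning down the exact relation between the spin character values $X^\lambda_\nu$ and the ordinary characters of $D(\lambda)$ — including the powers of $2$ accounting for the double cover — is where the genuine work lies, and it is what simultaneously guarantees the rationality of the coefficients and the clean normalization of the leading cumulant $\mathfrak{R}_{2k}$.
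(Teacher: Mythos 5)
Your argument hinges on two inputs, and both are problematic. The clear-cut error is the ``decisive structural input'': the double diagram $D(\lambda)$ used in this paper is \emph{not} self-conjugate, its transition measure is not symmetric about the origin, and its odd free cumulants do \emph{not} vanish. Indeed $D(\lambda)$ has modified Frobenius coordinates $a_i=\lambda_i+\tfrac12$, $b_i=\lambda_i-\tfrac12$ (the arm exceeds the leg by one in each diagonal row), and Corollary~\ref{cor:free_cumulant} gives $R_{2k+1}(D(\lambda))=2k\,p_{2k-1}(\lambda)+\cdots\neq 0$; the paper states explicitly in Section~\ref{sec:another_double} that ``$R_k(D(\lambda))$ does not vanish even if $k$ is odd.'' (The vanishing you invoke holds for the \emph{symmetrized} double diagram $\mathtt{D}(\lambda)$ of Section~\ref{sec:another_double}, a different object, and the paper shows that route produces non-integer coefficients already at $k=2$.) Concretely, if one drops the $56R_3^2$ term of $K_7$ as you propose and substitutes $R_{2j}=2\mathfrak{R}_{2j}$ into $\tfrac12 K_7$, the coefficient of $\mathfrak{R}_2^2$ comes out as $448$, not the correct $560$ of $K_7^{\mathrm{spin}}$; the missing $112\mathfrak{R}_2^2$ is exactly the contribution of $56R_3(D(\lambda))^2$ with $R_3(D(\lambda))=2p_1+\mathrm{const}$. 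So discarding odd cumulants gives the wrong polynomial, and keeping them forces you to re-expand $R_{2i+1}(D(\lambda))$ in terms of the even ones anyway --- at which point you are back to doing the degree bookkeeping in $\Gamma$ that the actual proof performs. The second gap is the connection formula $\mathfrak{p}_{2k-1}(\lambda)=\tfrac12\,\mathrm{Ch}_{2k-1}(D(\lambda))+\sum_{j<k}c_{k,j}\,\mathrm{Ch}_{2j-1}(D(\lambda))$, which carries the entire weight of your argument but is only conjectured; it is not proved in \cite{Ivanov2004,Ivanov2006,M} in this form, and you acknowledge it as the ``genuine work.'' As stated, then, the proposal does not constitute a proof.

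For comparison, the paper's proof never passes through ordinary characters or Kerov's formula at all. It works inside the algebra $\Gamma$: via $p^{\mathrm{super}}_n(D(\lambda))$, the Rayleigh and transition measures of $D(\lambda)$, and the cumulant--moment formula, it establishes (Corollary~\ref{cor:free_cumulant}) that $\mathfrak{R}_{2k}=p_{2k-1}+(\text{an element of }\mathbb{Q}[p_1,p_3,\dots,p_{2k-3}]\text{ of degree}\le 2k-2)$, while \eqref{eq:spinCh_p1} gives the same statement for $\mathfrak{p}_{2k-1}$; subtracting and using $\mathbb{Q}[p_1,\dots,p_{2k-3}]=\mathbb{Q}[\mathfrak{R}_2,\dots,\mathfrak{R}_{2k-2}]$ finishes the argument. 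Note also that the paper's grading is $\deg\mathfrak{R}_{2j}=2j-1$, not $2j$, which is what makes the bound ``degree at most $2k-2$'' in the statement meaningful.
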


Here we only show that the coefficients in $K_{2k-1}^{\mathrm{spin}}$ are rational numbers. The degree of $\mathfrak{R}_{2k}$ is regarded as $\deg \mathfrak{R}_{2k}=2k-1$. We call the polynomial $K_{2k-1}^{\mathrm{spin}}$ a \emph{spin Kerov polynomial}.

\begin{Example} \label{ex:SpinKerov}
\begin{gather*}
K_1^{\mathrm{spin}}= \mathfrak{R}_2, \\
K_3^{\mathrm{spin}}= \mathfrak{R}_4+\mathfrak{R}_2, \\
K_5^{\mathrm{spin}}= \mathfrak{R}_6+ 15\mathfrak{R}_4 + 10 \mathfrak{R}_2^2 +8 \mathfrak{R}_2, \\
K_7^{\mathrm{spin}}= \mathfrak{R}_8+70\mathfrak{R}_6+ 168\mathfrak{R}_4 \mathfrak{R}_2+ 56 \mathfrak{R}_2^3+ 469 \mathfrak{R}_4 +560 \mathfrak{R}_2^2 +180 \mathfrak{R}_2, \\
K_9^{\mathrm{spin}}= \mathfrak{R}_{10} +210 \mathfrak{R}_{8}+600 \mathfrak{R}_{6} \mathfrak{R}_{2} + 540 \mathfrak{R}_{4}^2+1080 \mathfrak{R}_{4} \mathfrak{R}_{2}^2 +240 \mathfrak{R}_{2}^4 \\
\hphantom{K_9^{\mathrm{spin}}=}{} +5985 \mathfrak{R}_{6} +23016 \mathfrak{R}_{4} \mathfrak{R}_{2}+9120 \mathfrak{R}_{2}^3+26060 \mathfrak{R}_{4} + 41628\mathfrak{R}_{2}^2 + 8064 \mathfrak{R}_{2}.
\end{gather*}
\end{Example}

It seems that formulas in Example~\ref{ex:SpinKerov} resemble those in Example~\ref{ex:KerovPoly}. The proof of Theorem~\ref{thm:MainThm} is accomplished by observing relations among some collections of generators in a symmetric function algebra~$\Gamma$. We obtain formulas in Example~\ref{ex:SpinKerov} as the by-products. Not only Theorem~\ref{thm:MainThm} but also Example~\ref{ex:SpinKerov} is an important result in this paper because the latter one indicates a spin analogue of Theorem~\ref{thm:PositivityFeray} to us.

\begin{Conjecture}[spin Kerov conjecture] \label{SpinKerovConj} All coefficients in spin Kerov polynomials are nonnegative integers.
\end{Conjecture}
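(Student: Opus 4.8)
The plan is to reduce the spin positivity conjecture to the classical Kerov positivity theorem (Theorem~\ref{thm:PositivityFeray}, due to F\'{e}ray) by exploiting the relationship between the spin quantities and the ordinary ones on double diagrams. The starting observation is structural: $\mathfrak{R}_{2j}(\lambda)$ is defined as half of the even free cumulant $R_{2j}$ of the double diagram $\tilde{\lambda}$ of the strict partition $\lambda$, and $\mathfrak{p}_{2k-1}(\lambda)$ is the spin analogue of the normalized character. Since double diagrams are a special class of Young diagrams carrying an extra symmetry, one expects $\mathfrak{p}_{2k-1}(\lambda)$ to be expressible through the ordinary normalized character $\mathrm{Ch}_\nu(\tilde{\lambda})$, and the odd free cumulants $R_{2j+1}(\tilde{\lambda})$ to vanish by the symmetry of $\tilde{\lambda}$. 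First I would establish these two dictionary facts precisely: the vanishing of odd free cumulants on double diagrams, and an explicit formula relating $\mathfrak{p}_{2k-1}$ to a combination of $\mathrm{Ch}_\nu(\tilde{\lambda})$ with $\nu$ odd.

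Second, with the odd cumulants eliminated, the classical Kerov polynomial $K_{2k-1}$ evaluated at $\tilde{\lambda}$ collapses to a polynomial purely in the even cumulants $R_2(\tilde{\lambda}), R_4(\tilde{\lambda}),\dots$, hence in the $\mathfrak{R}_{2j}(\lambda)$ after the factor-of-two rescaling. The idea is then to compare this restricted classical polynomial with the spin Kerov polynomial $K_{2k-1}^{\mathrm{spin}}$ term by term. If one can show that $K_{2k-1}^{\mathrm{spin}}$ is obtained from the classical $K_{2k-1}$ (or from a related character of the double cover, e.g.\ $\mathrm{Ch}_{2k-1}$ on $\tilde{\lambda}$) by the substitution that kills odd $R_j$ and rescales the even ones, then every coefficient of $K_{2k-1}^{\mathrm{spin}}$ is a nonnegative-integer combination of coefficients of $K_{2k-1}$ times positive powers of two, and F\'{e}ray's theorem delivers nonnegativity. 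The numerics in Example~\ref{ex:SpinKerov} strongly support such a substitution rule (for instance the passage $5R_2^2 \mapsto 10\mathfrak{R}_2^2$, $224 R_2^2 \mapsto 560\mathfrak{R}_2^2$ in $K_5,K_7$ shows factors of $2$ appearing, while linear-in-$\mathfrak{R}$ coefficients match exactly), so the first step I would take computationally is to pin down the exact power of two attached to each monomial as a function of its free-cumulant degree, which should be governed by the number of factors and the scaling $\mathfrak{R}_{2j}=\tfrac12 R_{2j}$.

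The hard part will be proving that no purely spin-theoretic positivity obstruction survives this reduction --- that is, that $K_{2k-1}^{\mathrm{spin}}$ really is the image of a single honestly positive classical object under the substitution, rather than a difference of such images. The subtlety is that the character formula relating $\mathfrak{p}_{2k-1}$ to ordinary characters on the double diagram may involve several $\mathrm{Ch}_\nu(\tilde{\lambda})$ for different odd $\nu$, or a shift, and cancellations between their Kerov expansions could in principle produce negative coefficients even when each summand is positive. To control this I would need a combinatorial model in the spirit of \cite{DolegaFeraySniady2010}: interpret the spin coefficients as weighted counts of certain maps or factorizations adapted to the double-diagram symmetry, so that nonnegativity becomes manifest at the combinatorial level rather than merely at the level of integer identities. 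Integrality, the second half of the conjecture, would then have to be extracted separately, since the theorem as stated only guarantees rational coefficients; here I would argue that the factors of two introduced by the rescaling exactly clear the denominators coming from the $\tfrac12$ normalization, which is plausible given that all listed examples are integral, but making this denominator-clearing uniform in $k$ is the delicate point where a clean structural statement about $\Gamma$ and its generators would be required.
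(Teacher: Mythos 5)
First, a point of framing: the statement you were asked about is a \emph{Conjecture} in this paper. The author proves only that the coefficients of $K_{2k-1}^{\mathrm{spin}}$ are rational (Theorem~\ref{thm:MainThm}) and leaves positivity and integrality open, so there is no proof in the paper to compare yours against; your proposal must stand on its own, and by your own admission of the ``hard part'' it is a research plan rather than a proof. More seriously, both of your load-bearing ``dictionary facts'' fail concretely against results stated in the paper. The odd free cumulants of the double diagram $D(\lambda)$ do \emph{not} vanish: Corollary~\ref{cor:free_cumulant} gives $R_{2k+1}(D(\lambda))=2k\,p_{2k-1}(\lambda)+\cdots$, and Section~\ref{sec:another_double} remarks explicitly that $R_k(D(\lambda))$ is nonzero for odd $k$. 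The diagram carrying the symmetry you invoke is the \emph{symmetrized} double $\mathtt{D}(\lambda)$, for which indeed $\mathtt{R}_k=0$ for odd $k$; but the paper shows that precisely this choice destroys integrality, e.g.\ $\mathfrak{p}_3=\mathtt{R}_4+\tfrac{7}{4}\mathtt{R}_2$ after Proposition~\ref{prop:Kerov2}. So in the only setting where ``kill the odd cumulants in $K_{2k-1}$ and rescale'' is meaningful, the resulting coefficients are not even integers, and cannot be nonnegative-integer combinations of F\'eray's coefficients (Theorem~\ref{thm:PositivityFeray}) times powers of two.

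Your proposed monomial-wise substitution rule is also refuted by the paper's own tables below top weight. Comparing Examples~\ref{ex:KerovPoly} and~\ref{ex:SpinKerov}: $224R_2^2$ in $K_7$ versus $560\mathfrak{R}_2^2$ in $K_7^{\mathrm{spin}}$ gives ratio $5/2$; $2400R_2^3$ versus $9120\mathfrak{R}_2^3$ in the $k=5$ pair gives $19/5$; $10548R_4R_2$ versus $23016\mathfrak{R}_4\mathfrak{R}_2$ is likewise not a power of two. Hence $K_{2k-1}^{\mathrm{spin}}$ is not the image of $K_{2k-1}$ under any rescaling $R_{2j}\mapsto c\,\mathfrak{R}_{2j}$ combined with killing odd cumulants; the paper conjectures the power-of-two coincidence only for top-weight monomials with $2s_2+4s_4+\cdots=2k-2$, which is exactly where the factors of two you observed live. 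Moreover, terms such as $56R_3^2$ in $K_7$ and $480R_5R_3$ in $K_9$ carry odd cumulants that genuinely contribute on $D(\lambda)$ and have no spin counterpart, so the cancellations you flag as the ``hard part'' are not a technicality but an unavoidable obstruction to this reduction; and the input formula expressing $\mathfrak{p}_{2k-1}$ through $\mathrm{Ch}_\nu(D(\lambda))$ is itself unestablished. The conjecture remains open; your closing suggestion of a combinatorial model in the spirit of Do{\l}\c{e}ga--F\'eray--\'Sniady is the plausible direction (the paper's discussion points to projective Young symmetrizers and notes their difficulty), but none of the intermediate steps you outline currently stands.
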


\subsection{Discussion and outline}

Comparing Example~\ref{ex:SpinKerov} with Example~\ref{ex:KerovPoly}, we can find some interesting coincidences for coefficients. For example, the coefficient of linear terms $\mathfrak{R}_{2j}$ in $K_{2k-1}^{\mathrm{spin}}$ likely coincides with that of~$R_{2j}$ in~$K_{2k-1}$. Furthermore, if $2s_2+4s_4+ \cdots =2k-2$, the coefficient of $\mathfrak{R}_2^{s_2} \mathfrak{R}_4^{s_4} \cdots $ in $K_{2k-1}^{\mathrm{spin}}$ likely coincide with $2^{s_2+s_4+\cdots-1}$ times the coefficient of $R_2^{s_2} R_4^{s_4} \cdots$ in $K_{2k-1}$. We do not discuss their coincidence or combinatorial interpretations here. We leave them in future work.

We give the proof of Theorem \ref{thm:MainThm} and how to derive Example~\ref{ex:SpinKerov} with precise definitions in Section~\ref{sec:MainPart} after the review of preliminary facts for free cumulants Section~\ref{sec:Preliminary}. As mentioned, the ordinary Kerov conjecture was first proved by F\'{e}ray~\cite{Feray2009}. After that, the authors of~\cite{DolegaFeraySniady2010} gave alternative proof by using Stanley--F\'{e}ray polynomials. The Stanley--F\'{e}ray polynomials were analyzed by using Young symmetrizers \cite{FeraySniady2011a}. In order to attack Conjecture~\ref{SpinKerovConj}, it is a~natural practice to study projective Young symmetrizers. The projective Young symmetrizer was studied in \cite{Jones1998, Nazarov1997}. Unfortunately it was quite complicated, so it seems difficult to apply it to our problem.

In Section~\ref{sec:another_double}, we try another choice for free cumulants. The another free cumulants $\mathtt{R}_{2k}(\lambda)$ comes from a symmetrized double diagram $\mathtt{D}(\lambda)$ of $\lambda$. The symmetrized diagrams $\mathtt{D}(\lambda)$ are useful for the study of asymptotics of Plancherel measures in the spin setting (see \cite{Ivanov2006} and \cite[Chapter~4]{DarioThesis}). Since odd-numbered quantities $\mathtt{R}_{2k-1}$ vanish by virtue of the symmetry of $\mathtt{D}(\lambda)$, they are simpler than free cumulants of $D(\lambda)$ in a sense. Even if we replace $\mathfrak{R}_{2k}$ with $\mathtt{R}_{2k}$, we can show the existence of ``spin Kerov polynomials'' easily. However, the corresponding polynomial for $\mathfrak{p}_{2k-1}$ with $k=2$ has a \emph{non-integer} coefficient. In this sense, we view the $\mathtt{R}_{2k}$ as being unsuitable for spin Kerov polynomials. We thus believe that the~$\mathfrak{R}_{2k}$ are the most appropriate choice in the spin setting.

\section{Preliminary} \label{sec:Preliminary}

In the present section, we review the transition measures of a partition according to \cite{Hora, IvanovOlshanski}.

\subsection{Partitions}

A \emph{partition} $\lambda=(\lambda_1,\lambda_2,\dots)$ is a weakly decreasing sequence of nonnegative integers satisfying $|\lambda|=\sum\limits_{i \ge 1} \lambda_i<\infty$. When $|\lambda|=n$, we say $\lambda$ to be a partition of $n$ and sometimes write as $\lambda \vdash n$. The number of nonzero $\lambda_i$ is called the length of $\lambda$ and written as $\ell(\lambda)$.

We usually identity a partition with its Young diagram. We denote by $\lambda'=(\lambda_1',\lambda_2',\dots)$ the conjugate partition of $\lambda$, i.e., the Young diagram of $\lambda'$ is the transpose of that of $\lambda$ with respect to the diagonal line. Define $d=d(\lambda)$ by the number of boxes on the diagonal line in the Young diagram of $\lambda$. The \emph{modified Frobenius notation}
\begin{gather*}
[a_1,\dots,a_d \, | \, b_1,\dots, b_d]
\end{gather*}
of $\lambda$ is determined by
\begin{gather*}
a_i = \lambda_i-i+\frac{1}{2}, \qquad b_i = \lambda_i'-i+\frac{1}{2}, \qquad i=1,2,\dots, d.
\end{gather*}
Note that $\sum\limits_{i=1}^d (a_i+b_i)=|\lambda|$. For example, the modified Frobenius notation of $\lambda=(5,4,4,1,1) \vdash 15$ is $\big[4+\frac{1}{2}, 2+\frac{1}{2}, 1+\frac{1}{2} \, | \, 4+\frac{1}{2}, 1+\frac{1}{2}, 0+\frac{1}{2}\big]$.

\subsection{Free cumulants}

Let $\mu$ be a probability measure on $\mathbb{R}$ with a compact support. Define the Cauchy transform of~$\mu$ by
\begin{gather*}
G_\mu(z)= \int_\bR \frac{1}{z-x} \mu (dx) = \sum_{k=0}^\infty \frac{M_k[\mu]}{z^{k+1}},\qquad z \in \mathbb{C} \setminus \mathbb{R}, \quad |z| \gg 1,
\end{gather*}
where the $M_k[\mu]$ are moments of $\mu$:
\begin{gather*}
M_k[\mu]= \int_\bR x^k \mu(dx).
\end{gather*}
\emph{Free cumulants} $R_k[\mu]$ $(k=1,2,\dots)$ of $\mu$ are defined via the famous free cumulant-moment formula, see, e.g., \cite[Chapter~1]{Hora}. We do not need the explicit definition here. We only use the relation
\begin{gather} \label{eq:CM_formula}
R_k = M_k+\sum_{\substack{\nu=(\nu_1,\nu_2,\dots) \vdash k \\ \nu_1<k}}c_\nu M_{\nu_1}M_{\nu_2} \cdots
\end{gather}
with some integer coefficients $c_\nu$.

\subsection{Kerov's transition measures} \label{subsec:transition_measures}

We draw the Young diagram of a partition $\lambda$ in Russian style, see, e.g., \cite[Fig.~2.1]{Hora}. Let
\begin{gather*}
x_1< y_1< \cdots <x_{r-1}<y_{r-1}<x_{r}
\end{gather*}
be the corresponding local minimas and maximas, which are integers by definition. We call these numbers the \emph{Kerov interlacing coordinates} of $\lambda$. It is known that they satisfy the relation \cite[Lemma~2.1]{Hora}
\begin{gather} \label{eq:interlacing_relation}
\sum_{i=1}^r x_r = \sum_{j=1}^{r-1} y_{j}.
\end{gather}

For each partition $\lambda$, we define a probability measure $\mathfrak{m}_\lambda$ on $\mathbb{R}$ via the Cauchy transform
\begin{gather*}
G_{\mathfrak{m}_\lambda}(z)=\frac{\prod\limits_{i=1}^{r-1} (z-y_i)} {\prod\limits_{i=1}^r (z-x_i)}.
\end{gather*}
This probability measure, called \emph{Kerov's transition measure} of $\lambda$, is supported by the set $\{x_1,\dots,x_r\}$. Denote by $R_k(\lambda)$ $(k=1,2,\dots)$ the free cumulant of the measure $\mathfrak{m}_\lambda$: $R_k (\lambda)= R_k[\mathfrak{m}_\lambda]$. It has the expression \cite[Theorem~2]{Biane2003}
\begin{gather} \label{eq:Biane_FC}
R_k(\lambda)= -\frac{1}{k-1} \big[z^{-1}\big] G_{\mathfrak{m}_\lambda}(z)^{-(k-1)}
\end{gather}
for $k \ge 2$.

\subsection[Super power-sums $p_k^{\mathrm{super}}$]{Super power-sums $\boldsymbol{p_k^{\mathrm{super}}}$}\label{subsec:super_symmetric_poly}

For each partition $\lambda$ with modified Frobenius expression $[a_1,\dots,a_d \, | \, b_1,\dots, b_d]$
and for $k=1,2,\dots$, we define
\begin{gather*}
p_k^{\mathrm{super}}(\lambda)=\sum_{i=1}^d (a_i^k +(-1)^{k-1} b_i^k).
\end{gather*}
These are also determined via
\begin{gather} \label{eq:phi_Frobenius}
\phi(z;\lambda) := \prod_{i=1}^d \frac{z+b_i}{z-a_i}
=\exp \left( \sum_{k =1}^\infty \frac{p_k^{\mathrm{super}}(\lambda)}{k}
\frac{1}{z^k}\right).
\end{gather}
Note that $p_1^{\mathrm{super}}(\lambda)=\sum\limits_{i=1}^d (a_i+b_i)=|\lambda|$.

\begin{Lemma}[{\cite[Proposition 2.1]{Hora}}]\label{lem:G_to_phi} For each partition $\lambda$, we have
\begin{gather*}
z G_{\mathfrak{m}_\lambda}(z) = \frac{\phi\big(z-\frac{1}{2};\lambda\big)}{\phi\big(z+\frac{1}{2};\lambda\big)}.
\end{gather*}
\end{Lemma}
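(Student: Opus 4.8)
The plan is to argue by induction on the number of boxes $|\lambda|$, adding one box at a time, and to check that both sides of the asserted identity transform in exactly the same way under such a step. The base case is the empty diagram: there $r=1$, $x_1=0$, so $G_{\mathfrak{m}_\emptyset}(z)=1/z$ and $zG_{\mathfrak{m}_\emptyset}(z)=1$, while $\phi(z;\emptyset)=1$ (empty product, $d=0$), so both sides equal $1$. Since every partition is obtained from $\emptyset$ by successively adding boxes at addable corners, it suffices to show that when we pass from $\lambda$ to the diagram $\lambda^+$ obtained by adding a single box of content $c$, both $zG_{\mathfrak{m}_\lambda}(z)$ and $\phi\big(z-\frac12;\lambda\big)/\phi\big(z+\frac12;\lambda\big)$ get multiplied by the same rational function of $z$. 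The key is to use the most convenient encoding of each side: the piecewise-linear Russian profile for the left-hand side and the edge sequence (``Maya diagram'') of half-integers for the right-hand side.

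For the left-hand side I would work with the Russian profile $\omega_\lambda$, the piecewise-linear function whose local minima are the $x_i$ and whose local maxima are the $y_j$. Its distributional second derivative is $\frac12\omega_\lambda''=\sum_i\delta_{x_i}-\sum_j\delta_{y_j}$, and the Cauchy transform is recovered as $\log G_{\mathfrak{m}_\lambda}(z)=-\int_{\bR}\log(z-t)\,\frac12\omega_\lambda''(t)\,dt$, reproducing $G_{\mathfrak{m}_\lambda}(z)=\prod_j(z-y_j)/\prod_i(z-x_i)$. Adding a box of content $c$ (necessarily $c=x_{i_0}$ for some addable corner) modifies the profile by a symmetric ``tent'' supported on $[c-1,c+1]$, so that $\frac12\omega_{\lambda^+}''-\frac12\omega_\lambda''=\delta_{c-1}-2\delta_c+\delta_{c+1}$. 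Integrating against $-\log(z-t)$ and exponentiating gives
\begin{gather*}
\frac{G_{\mathfrak{m}_{\lambda^+}}(z)}{G_{\mathfrak{m}_\lambda}(z)}=\frac{(z-c)^2}{(z-c-1)(z-c+1)},
\end{gather*}
and the same ratio holds for $zG_{\mathfrak{m}_\lambda}(z)$ since the extra factor $z$ is unchanged. The advantage of this formulation is that the computation is insensitive to whether the new extrema collide with old ones: everything is linear in the measure $\frac12\omega''$.

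For the right-hand side I would encode $\lambda$ by the set $S=\{\lambda_i-i+\frac12:i\ge 1\}$ of occupied half-integers. A direct check against the modified Frobenius coordinates gives that the positive elements of $S$ are exactly $\{a_1,\dots,a_d\}$ and the negative non-elements of $S$ are exactly $\{-b_1,\dots,-b_d\}$, so that $\phi(z;\lambda)=\prod_{h\notin S,\,h<0}(z-h)\big/\prod_{h\in S,\,h>0}(z-h)$ in agreement with \eqref{eq:phi_Frobenius}. Adding a box of content $c$ corresponds to moving the single bead at $c-\frac12$ to the empty position $c+\frac12$. Whether these positions are positive or negative, emptying $c-\frac12$ contributes a factor $\big(z-(c-\frac12)\big)$ to the numerator of $\phi$ and filling $c+\frac12$ contributes the same factor to the denominator, so uniformly $\phi(z;\lambda^+)/\phi(z;\lambda)=\big(z-c+\frac12\big)/\big(z-c-\frac12\big)$. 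Substituting $z\mp\frac12$ and taking the ratio yields
\begin{gather*}
\frac{\phi\big(z-\frac12;\lambda^+\big)/\phi\big(z+\frac12;\lambda^+\big)}{\phi\big(z-\frac12;\lambda\big)/\phi\big(z+\frac12;\lambda\big)}=\frac{(z-c)^2}{(z-c-1)(z-c+1)},
\end{gather*}
which is exactly the factor found for the left-hand side, so induction closes the argument.

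The main obstacle is organizing the bookkeeping at the diagonal so that the two encodings really match step by step. Concretely, I must verify that ``addable corner of content $c$'' on the profile side is the same combinatorial event as ``the bead at $c-\frac12$ may move right to $c+\frac12$'' on the Maya side, and that the sign-uniform factor on the $\phi$-side persists across $c=0$, where the moving bead crosses the origin and switches between the $\{a_i\}$-block and the $\{b_i\}$-block of $\phi$. Once this dictionary is pinned down, both the tent computation and the bead-move computation are short and robust; the only genuinely delicate point is establishing the identifications $\{a_i\}=S\cap\bR_{>0}$ and $\{-b_i\}=\bR_{<0}\setminus S$, which is where the modified Frobenius notation and the conjugation symmetry of the edge sequence enter.
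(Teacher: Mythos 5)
Your argument is correct, but note that the paper does not actually prove this lemma: it is quoted verbatim from Hora's book as \cite[Proposition~2.1]{Hora}, so there is no in-paper proof to compare against. The standard textbook derivation is a direct, non-inductive computation: one identifies the interlacing coordinates $x_i$, $y_j$ with contents of addable and removable corners, writes $\prod_j(z-y_j)/\prod_i(z-x_i)$ as a telescoping product over the rows (or over the edge sequence) of $\lambda$, and matches it against $\phi\big(z-\frac12\big)/\big(z\,\phi\big(z+\frac12\big)\big)$ via the modified Frobenius coordinates. Your route instead verifies the base case $\lambda=\varnothing$ and shows both sides pick up the identical factor $(z-c)^2/\big((z-c-1)(z-c+1)\big)$ when a box of content $c$ is added; this is a legitimate and self-contained alternative. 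Its two load-bearing points are exactly the ones you flag: (i) the linearization via the Rayleigh measure $\frac12\omega_\lambda''=\sum_i\delta_{x_i}-\sum_j\delta_{y_j}$, which correctly absorbs the case analysis of new extrema colliding with old ones, and (ii) the Maya-diagram dictionary $\{a_i\}=S\cap\mathbb{R}_{>0}$, $\{-b_i\}=\mathbb{R}_{<0}\setminus S$ for $S=\{\lambda_i-i+\frac12\}$, which makes the factor $\big(z-c+\frac12\big)/\big(z-c-\frac12\big)$ sign-uniform, including across $c=0$. Both check out (e.g., for $\lambda=(5,4,4,1,1)$ the recipe in (ii) reproduces $a=\big(\frac92,\frac52,\frac32\big)$, $b=\big(\frac92,\frac32,\frac12\big)$ as in the paper), so the induction closes; if you write it up, you should include the short verification of (ii), since that is the only step that is not a one-line computation. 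What the inductive approach buys is robustness and a conceptual explanation of why the half-integer shifts $z\mp\frac12$ appear (they are the two bead positions $c\mp\frac12$ flanking a box of content $c$); what the direct approach buys is brevity and no need for the box-addition dictionary.
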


\subsection{Rayleigh measures}\label{subsec:Rayleigh_measures}

For each partition $\lambda$ with Kerov's interlacing coordinate $x_1<y_1< \cdots<y_{r-1}<x_r$, we introduce an $\mathbb{R}$-valued measure $\tau_\lambda$ on $\mathbb{R}$ by
\begin{gather*}
\tau_\lambda = \sum_{i=1}^r \delta_{x_i} - \sum_{i=1}^{r-1} \delta_{y_i},
\end{gather*}
where $\delta_x$ is the Dirac measure at a point $x$. This is called the \emph{Rayleigh measure} of $\lambda$. The moments are clearly given by
\begin{gather*}
M_k[\tau_\lambda]= \sum_{i=1}^r x_i^k - \sum_{i=1}^{r-1} y_i^k
\end{gather*}
for $k=1,2,\dots$. Note that $M_1[\tau_\lambda]=0$ by \eqref{eq:interlacing_relation}.

\begin{Lemma}[{\cite[Proposition 2.4]{Hora}}] \label{lem:ordinary_Rayleigh_super} For $n \ge 2$,
\begin{gather*}
M_n[\tau_\lambda]=\sum_{j=0}^{\lfloor (n/2)-1 \rfloor} \binom{n}{2j+1} 2^{-2j} p_{n-2j-1}^{\mathrm{super}}(\lambda) = n p_{n-1}^{\mathrm{super}}(\lambda) +\cdots.
\end{gather*}
\end{Lemma}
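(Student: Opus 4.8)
The goal is to express the moment $M_n[\tau_\lambda]$ of the Rayleigh measure in terms of the super power-sums $p_k^{\mathrm{super}}(\lambda)$, so the natural plan is to pass through the generating function $\phi(z;\lambda)$ that encodes the $p_k^{\mathrm{super}}$ via \eqref{eq:phi_Frobenius}. First I would produce a generating function for the moments $M_k[\tau_\lambda]$ themselves. Since $\tau_\lambda = \sum_i \delta_{x_i} - \sum_j \delta_{y_j}$, the Cauchy transform $G_{\tau_\lambda}(z) = \sum_i \frac{1}{z-x_i} - \sum_j \frac{1}{z-y_j}$ is the logarithmic derivative of the rational function $\frac{\prod_i(z-x_i)}{\prod_j(z-y_j)} = G_{\mathfrak{m}_\lambda}(z)^{-1}$. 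Thus $G_{\tau_\lambda}(z) = -\frac{d}{dz}\log G_{\mathfrak{m}_\lambda}(z)$, and expanding in powers of $1/z$ gives a moment generating series $\sum_{k\ge 0} M_k[\tau_\lambda] z^{-k-1}$.

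The key step is then to rewrite $\log G_{\mathfrak{m}_\lambda}(z)$ using Lemma~\ref{lem:G_to_phi}, which gives $zG_{\mathfrak{m}_\lambda}(z) = \phi(z-\tfrac12;\lambda)/\phi(z+\tfrac12;\lambda)$. Taking logarithms, $\log G_{\mathfrak{m}_\lambda}(z) = -\log z + \log\phi\big(z-\tfrac12;\lambda\big) - \log\phi\big(z+\tfrac12;\lambda\big)$, and by \eqref{eq:phi_Frobenius} we have $\log\phi(z;\lambda) = \sum_{k\ge 1}\frac{p_k^{\mathrm{super}}(\lambda)}{k}z^{-k}$. Differentiating term by term, the contribution to $G_{\tau_\lambda}(z)$ coming from $\phi$ is $\sum_{k\ge 1} p_k^{\mathrm{super}}(\lambda)\big[(z-\tfrac12)^{-k-1} - (z+\tfrac12)^{-k-1}\big]$ after accounting for signs. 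So I would expand $(z\mp\tfrac12)^{-k-1}$ as a binomial series in $1/z$ and collect the coefficient of $z^{-n-1}$ to read off $M_n[\tau_\lambda]$.

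The heart of the computation is the binomial bookkeeping: the difference $(z-\tfrac12)^{-k-1} - (z+\tfrac12)^{-k-1}$ retains only the \emph{odd} powers of $\tfrac12$ in the shift expansion, which is precisely the mechanism producing the sum over $j$ with binomial coefficient $\binom{n}{2j+1}$ and factor $2^{-2j}$ in the statement. Concretely, writing $(z\mp\tfrac12)^{-k-1} = z^{-k-1}\sum_{m\ge 0}\binom{-k-1}{m}(\mp\tfrac12)^m z^{-m}$, the even-$m$ terms cancel and the odd-$m$ terms double; matching total degree $-n-1$ forces $k = n-2j-1$ with $m = 2j+1$, and simplifying the resulting binomial product $\binom{-k-1}{2j+1}$ against the $2^{-(2j+1)}$ prefactor should collapse to $\binom{n}{2j+1}2^{-2j}$. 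The leading term, from $j=0$, gives $\binom{n}{1}p_{n-1}^{\mathrm{super}}(\lambda) = n\,p_{n-1}^{\mathrm{super}}(\lambda)$, matching the asserted top term.

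The main obstacle I anticipate is the signs and the identification of the binomial coefficients: one must track carefully how the $-\log z$ term and the two shifted logarithms combine, verify that the $\phi(z-\tfrac12)$ and $\phi(z+\tfrac12)$ contributions enter with opposite signs so their difference survives, and confirm that the generalized binomial coefficient $\binom{-k-1}{2j+1}$ simplifies to give exactly $\binom{n}{2j+1}$ after the substitution $k=n-2j-1$. This is elementary but error-prone, and checking the upper limit $j \le \lfloor (n/2)-1\rfloor$ (equivalently $k\ge 1$, so that $p_k^{\mathrm{super}}$ is defined) requires a little care at the boundary. Once the coefficient extraction is verified, the formula follows directly.
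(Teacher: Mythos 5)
Your derivation is correct: the identity $G_{\tau_\lambda}(z)=-\frac{d}{dz}\log G_{\mathfrak{m}_\lambda}(z)$, combined with Lemma~\ref{lem:G_to_phi} and \eqref{eq:phi_Frobenius}, does yield $G_{\tau_\lambda}(z)=\frac{1}{z}+\sum_{k\ge1}p_k^{\mathrm{super}}(\lambda)\big[\big(z-\tfrac12\big)^{-k-1}-\big(z+\tfrac12\big)^{-k-1}\big]$, and the binomial bookkeeping works out exactly as you anticipate, since $\binom{-k-1}{2j+1}=-\binom{k+2j+1}{2j+1}$ collapses to $-\binom{n}{2j+1}$ under $k=n-2j-1$ and the factor $2\cdot 2^{-(2j+1)}=2^{-2j}$ appears from the doubling of odd terms, with the range $k\ge1$ giving precisely $j\le\lfloor n/2\rfloor-1$. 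The paper itself states this lemma without proof, citing Hora's book, and your argument is the standard generating-function derivation one would find there.
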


\subsection{Relations between two measures}

Two measures $\mathfrak{m}_\lambda$ and $\tau_\lambda$ have the following relation, which is nothing but that the relation between complete symmetric functions and power-sum symmetric functions.

\begin{Lemma}[{\cite[Proposition 2.2]{Hora}}] \label{lem:transition_Rayleigh}
\begin{gather*}
1+\sum_{n=1}^\infty M_n [\mf{m}_\lambda] \frac{1}{z^n} = \exp \left( \sum_{k=1}^\infty \frac{M_k[\tau_\lambda]}{k} \frac{1}{z^k} \right).
\end{gather*}
Equivalently,
\begin{gather*}
M_n [\mf{m}_\lambda] = \sum_{\nu=(\nu_1,\nu_2,\dots) \vdash n} z_\nu^{-1} M_{\nu_1}[\tau_\lambda]M_{\nu_2}[\tau_\lambda] \cdots,
\end{gather*}
where $z_\nu= \prod\limits_{i \ge 1} i^{m_i} m_i!$ with $m_i=m_i(\nu)=|\{j \ge 1 \, | \, \nu_j=i\}|$.
\end{Lemma}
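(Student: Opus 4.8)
The plan is to verify the exponential generating identity directly, and then read off the coefficient form as the standard passage between complete homogeneous and power-sum symmetric functions.

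First I would recognize that the left-hand side is nothing but $zG_{\mf{m}_\lambda}(z)$. Since $\mf{m}_\lambda$ is a probability measure we have $M_0[\mf{m}_\lambda]=1$, so the expansion $G_{\mf{m}_\lambda}(z)=\sum_{k\ge 0}M_k[\mf{m}_\lambda]z^{-(k+1)}$ yields $zG_{\mf{m}_\lambda}(z)=1+\sum_{n\ge 1}M_n[\mf{m}_\lambda]z^{-n}$, which is exactly the left-hand side. By the product formula for the Cauchy transform recalled in Section~\ref{subsec:transition_measures}, this equals $z\prod_{i=1}^{r-1}(z-y_i)\big/\prod_{i=1}^r(z-x_i)$.

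Next I would compute the right-hand side. Substituting the moments $M_k[\tau_\lambda]=\sum_{i=1}^r x_i^k-\sum_{i=1}^{r-1}y_i^k$ and using the elementary expansion $\sum_{k\ge 1}\frac1k(a/z)^k=-\log(1-a/z)$, valid for $|z|$ large, I obtain
\begin{gather*}
\sum_{k=1}^\infty \frac{M_k[\tau_\lambda]}{k}\frac{1}{z^k}
= -\sum_{i=1}^r \log\left(1-\frac{x_i}{z}\right)
+ \sum_{i=1}^{r-1}\log\left(1-\frac{y_i}{z}\right).
\end{gather*}
Exponentiating and clearing the powers of $z$ produced by the factors $1-x_i/z$ and $1-y_i/z$ gives precisely $z\prod_{i=1}^{r-1}(z-y_i)\big/\prod_{i=1}^r(z-x_i)$, matching the expression found for the left-hand side. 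This establishes the first displayed identity.

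Finally, the equivalence with the coefficient formula is the familiar generating-function relation between the complete homogeneous symmetric functions and the power sums: if $\sum_{n\ge 0}h_n t^n=\exp\big(\sum_{k\ge 1}\frac{p_k}{k}t^k\big)$, then $h_n=\sum_{\nu\vdash n}z_\nu^{-1}p_\nu$. Setting $t=1/z$ and identifying $M_n[\mf{m}_\lambda]$ with $h_n$ and $M_k[\tau_\lambda]$ with $p_k$, I would expand the exponential as $\prod_k\sum_{m_k\ge 0}\frac{1}{m_k!}(M_k[\tau_\lambda]/k)^{m_k}z^{-km_k}$, collect the monomials with $\sum_k km_k=n$, and note that the multinomial bookkeeping produces exactly the factor $z_\nu^{-1}=\prod_i i^{-m_i}(m_i!)^{-1}$. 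I do not anticipate a genuine obstacle here: the only point requiring care is justifying the termwise logarithm and exponential manipulations, and this is immediate because both measures are supported on a finite set of real points, so all series converge absolutely for $|z|$ large and the identity also holds coefficientwise as formal power series in $1/z$.
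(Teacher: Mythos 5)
Your proof is correct: the paper itself cites this lemma from [Hora, Proposition~2.2] without reproducing a proof, and your argument --- identifying the left side with $zG_{\mf{m}_\lambda}(z)$, expanding $\sum_k M_k[\tau_\lambda]z^{-k}/k$ into logarithms over the finitely many interlacing coordinates, and then invoking the standard $h$-to-$p$ generating-function identity --- is exactly the computation the paper alludes to when it says the relation is ``nothing but that the relation between complete symmetric functions and power-sum symmetric functions.'' No gaps; the convergence remark for $|z|$ large is all the justification needed since both measures have finite support.
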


Combining above formulas, we can express free cumulants $R_k(\lambda)$ in terms of functions $p_j^{\mathrm{super}}(\lambda)$ in principle. This fact will be applied in the next section.

\section{Spin Kerov polynomials} \label{sec:MainPart}

Our goal of this section is to define the counterpart of the Kerov polynomial in the spin representation setting.

\subsection{Strict and odd partitions}

A partition $\lambda=(\lambda_1,\lambda_2,\dots)$ is said to be \emph{strict} if its nonzero parts $\lambda_i$ are distinct. Let $\mathcal{SP}$ be the set of all strict partitions. For a strict partition $\lambda =(\lambda_1>\lambda_2>\cdots>\lambda_l>0)$, we define the \emph{double} of $\lambda$ by
\begin{gather} \label{eq:double_Frobenius}
D(\lambda)=\big[\lambda_1+\tfrac{1}{2},\lambda_2+\tfrac{1}{2}, \dots, \lambda_l+\tfrac{1}{2} \, | \, \lambda_1-\tfrac{1}{2},\lambda_2-\tfrac{1}{2}, \dots, \lambda_l-\tfrac{1}{2}\big]
\end{gather}
in the modified Frobenius notation. Note that $|D(\lambda)|=2|\lambda|$.

\begin{Example}For a strict partition $\lambda=(5,4,2,1)$, we have three kinds of diagrams as below. On the left, the Young diagram (in English style) is a collection of left-justified rows of boxes where the $i$-th row has $\lambda_i$ boxes reading from top to bottom. On the middle, the shifted diagram is obtained from the Young diagram by shifting the $i$-th row $(i-1)$ boxes to the right, for each $i \ge 2$. Moreover, on the right, the Young diagram of $D(\lambda)$, or the double diagram of $\lambda$, is obtained by combining the shifted diagram with its reflection (the gray area) on the diagonal.
\begin{gather*}
\ydiagram{5,4,2,1} \qquad \qquad
\ydiagram{1+5,2+4,3+2,4+1} \qquad \qquad
\ydiagram{1+5,2+4,3+2,4+1}*[*(gray)]{1,2,3, 4,2}
\end{gather*}
\end{Example}

A (not necessary strict) partition $\rho=(\rho_1,\rho_2,\dots)$ is said to be \emph{odd} if all nonzero $\rho_i$ are odd. It is well known that the number of strict partitions of $n$ coincides with that of odd partitions of~$n$.

\subsection{Symmetric functions}

We review some symmetric functions according to Macdonald's book \cite[Chapter~III.8]{Mac}. Recall the power-sum symmetric function
\begin{gather*}
p_k(\bm{x}) =x_1^{k} +x_2^k+x_3^k+\cdots
\end{gather*}
in infinitely many variables $\bm{x}=(x_1,x_2,x_3,\dots)$. For an odd partition $\rho=(\rho_1,\dots,\rho_l)$, set
$p_\rho=p_{\rho_1} p_{\rho_2} \cdots p_{\rho_l}$.

Let $\Gamma$ be the $\mathbb{Q}$-algebra generated by odd power-sum symmetric functions $p_k$ $(k=1,3,5,\dots)$.
The degree on $\Gamma$ is naturally defined by
\begin{gather*}
\deg p_{k}=k, \qquad k=1,3,5,\dots.
\end{gather*}

For each $\lambda \in \mcal{SP}$, denote by $P_\lambda$ Schur's $P$-function, see the definition in \cite[Chapter~III.8]{Mac}. The family $\{P_\lambda \, | \, \lambda \in \mcal{SP}\}$ and $\{p_\rho \, | \, \text{$\rho$ are odd partitions}\}$ form a linear basis of $\Gamma$, respectively. The quantity $X^\lambda_\rho$, where $\lambda$ is a strict partition and $\rho$ is an odd partition with $|\lambda|=|\rho|$, is defined via the relation
\begin{gather*}
p_\rho= \sum_{\lambda: |\lambda|=|\rho|} X^\lambda_\rho P_\lambda.
\end{gather*}
Put $g^\lambda=X^{\lambda}_{(1^{|\lambda|})}$. These quantities $X^\lambda_\rho$ are integers and encode character values of irreducible spin representations of symmetric groups~\cite{HH}. In particular, a positive integer $g^\lambda$ times a power of $2$ coincides with the dimension of an irreducible spin representation of symmetric groups.

Each symmetric function $f$ in $\Gamma$ is regarded as a $\mathbb{Q}$-valued function on $\mcal{SP}$, by
\begin{gather*}
f(\lambda)=f(\lambda_1,\lambda_2,\dots,\lambda_l,0,0,\dots), \qquad \lambda=(\lambda_1>\lambda_2>\cdots >\lambda_l>0).
\end{gather*}
In particular, $p_k(\lambda)=\sum\limits_{i =1}^l \lambda_i^k$. For two functions $f,g \in \Gamma$, it holds $f(\lambda)=g(\lambda)$ for all $\lambda \in \mcal{SP}$ if and only if $f=g$ \cite[Proposition~6.2]{Ivanov2004}.

\subsection{Spin characters}

The spin (or projective) analogue of $\mathrm{Ch}_\nu$ given in \eqref{eq:normalized_ch} is defined as follows \cite{Ivanov2004, Ivanov2006, M}. For an odd partition $\rho$ of $k$ and strict partition $\lambda$ of $n$ with $k \le n$, we define
\begin{gather*}
\mathfrak{p}_\rho(\lambda)= n(n-1)\cdots (n-k+1) \frac{X^{\lambda}_{\rho \cup (1^{n-k})}}{g^\lambda},
\end{gather*}
where $\rho \cup \big(1^{n-k} \big)=(\rho_1,\rho_2,\dots,\rho_l,1,1,\dots,1) \vdash n$. Set $\mathfrak{p}_\rho(\lambda)=0$ for $k >n$. In this paper, we focus the spin characters $\mathfrak{p}_{2k-1}=\mathfrak{p}_{(2k-1)}$ for one-row odd partitions $\rho=(2k-1)$.

The collection $\{\mathfrak{p}_{2k-1} \, | \, k=1,2,3,\dots \}$ forms an algebraic basis of $\Gamma$ \cite[Proposition 6.4]{Ivanov2004}. More specifically, we have
\begin{gather} \label{eq:spinCh_p1}
\mathfrak{p}_{2k-1}= p_{2k-1}+ (\text{a polynomial in $p_1,p_3,\dots, p_{2k-3}$ of degree smaller than $2k-1$)}.
\end{gather}

\begin{Example} \label{ex:SpinCharacter}The functions $\mathfrak{p}_{2k-1}$ for $1 \le k \le 5$ are expanded in terms of $p_j$ as follows
\begin{gather*}
\mf{p}_{1}= p_1, \\
\mf{p}_{3}= p_3-3p_1^2+2p_1, \\
\mf{p}_{5}= p_5 -10 p_{3} p_1+\frac{55}{3} p_3 +\frac{50}{3} p_{1}^3 -50p_1^2 +24 p_1, \\
\mathfrak{p}_7= p_7-14 p_5 p_1-7 p_3^2+77 p_5 +98 p_3 p_1^2-\frac{1862}{3} p_3 p_1 -\frac{343}{3} p_1^4 \\
\hphantom{\mathfrak{p}_7=}{} +\frac{2128}{3} p_3+\frac{2744}{3} p_1^3 -1764 p_1^2+720 p_1, \\
\mathfrak{p}_9= p_9 -18 p_7 p_1 -18 p_5 p_3 +222 p_7 +162 p_5 p_1^2+ 162 p_3^2 p_1-2538 p_5 p_1 -1026 p_3^2-972 p_3 p_1^3 \\
\hphantom{\mathfrak{p}_9=}{} + \frac{37401}{5} p_5 +14094 p_3 p_1^2+ \frac{4374}{5} p_1^5
-52704 p_3 p_1 - 14580 p_1^4 \\
\hphantom{\mathfrak{p}_9=}{} + 47492 p_3+ 70632 p_1^3 -109584 p_1^2 +40320 p_1.
\end{gather*}
\end{Example}

These examples are obtained by using the formula given in \cite[Proposition 3.3]{Ivanov2006}: For odd $k=1,3,5,\dots$,
\begin{gather*}
\mathfrak{p}_{k}(\lambda)= \big[z^{-1}\big] \left(-\frac{1}{4k}\right) (2z-k)\prod_{j=1}^{k-1} (z-j) \cdot \frac{\Phi(z;\lambda)}{\Phi(z-k;\lambda)},
\end{gather*}
where $\Phi(z;\lambda)$ is defined by \eqref{eq:Phi_definition} or \eqref{eq:Phi_definition2} below. Here $\big[z^{-1}\big]Q(z)$ stands for the coefficient of $z^{-1}$ in the Laurent series expansion of $Q(z)$ at $z=\infty$.

\begin{Remark} The formulas in Example \ref{eq:spinCh_p1} are also obtained in the following way. First, we expand $\mathfrak{p}_\rho$ in terms of factorial Schur $P$-functions~$P^*_\lambda$.
Second, we expand each $P^*_\lambda$ in terms of (ordinary) Schur $P$-functions~$P_\nu$. Finally, we expand each $P_\nu$ in terms of odd power-sum symmetric functions~$p_\sigma$. See \cite[Example~3.3]{M} for details.
\end{Remark}

\subsection{Super symmetric polynomials}

Let $\lambda$ be a strict partition. Put
\begin{gather} \label{eq:Phi_definition}
\Phi(z;\lambda)= \prod_{i=1}^{\ell(\lambda)} \frac{z+\lambda_i}{z-\lambda_i}.
\end{gather}
It is easy to see that
\begin{gather} \label{eq:Phi_definition2}
\log \Phi(z;\lambda)= 2 \sum_{k=1}^\infty \frac{p_{2k-1}(\lambda)}{2k-1} \frac{1}{z^{2k-1}}.
\end{gather}
Recall functions $p_n^{\mathrm{super}}$ introduced in Section~\ref{subsec:super_symmetric_poly}. The following proposition is a key in the proof of Theorem \ref{thm:MainThm}.

\begin{Proposition} \label{prop:strict_super_p} For each strict partition $\lambda$ and $n=1,2,3,\dots$, we have
\begin{gather*}
p_n^{\mathrm{super}}(D(\lambda))= \sum_{j=0}^{\lfloor (n-1)/2 \rfloor} \binom{n}{2j+1} \frac{1}{2^{n-2j-2}} p_{2j+1}(\lambda).
\end{gather*}
In particular, for each $k=1,2,3,\dots$,
\begin{gather*}
p_{2k-1}^{\mathrm{super}}(D(\lambda))= 2p_{2k-1}(\lambda)+ \cdots \qquad \text{and} \qquad p_{2k}^{\mathrm{super}}(D(\lambda))= 2k p_{2k-1}(\lambda)+ \cdots,
\end{gather*}
where dots are linear combinations of $\{p_{2j-1}\}_{j=1,2,\dots,k-1}$ with $\mathbb{Q}_{\ge 0}$-coefficients. Therefore the family $\big\{p_{2k-1}^{\mathrm{super}}(D(\cdot))\big\}_{k=1,2,3,\dots}$ is an algebraic basis of $\Gamma$.
\end{Proposition}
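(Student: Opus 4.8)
The plan is to reduce the entire statement to an elementary binomial computation for a single part $\lambda_i$ and then sum over $i$. By \eqref{eq:double_Frobenius} the double $D(\lambda)$ has modified Frobenius coordinates $a_i = \lambda_i + \tfrac12$ and $b_i = \lambda_i - \tfrac12$ for $i = 1, \dots, \ell(\lambda)$. Substituting these into the definition $p_n^{\mathrm{super}}(\mu) = \sum_i \big(a_i^n + (-1)^{n-1} b_i^n\big)$ from Section~\ref{subsec:super_symmetric_poly}, I would write
\begin{gather*}
p_n^{\mathrm{super}}(D(\lambda)) = \sum_{i=1}^{\ell(\lambda)} \left[ \left(\lambda_i + \tfrac12\right)^n + (-1)^{n-1} \left(\lambda_i - \tfrac12\right)^n \right],
\end{gather*}
so it suffices to understand the bracketed expression for a single real number $x = \lambda_i$.

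The key step is a parity cancellation. Expanding both powers by the binomial theorem gives the single sum $\sum_{m=0}^n \binom{n}{m} x^{n-m} 2^{-m} \big[1 + (-1)^{n-1+m}\big]$. The bracket equals $2$ when $n+m$ is odd and $0$ otherwise; since $n-m$ and $n+m$ have the same parity, exactly the terms with $n-m$ odd survive. Writing $n-m = 2j+1$, hence $m = n-2j-1$ with $0 \le j \le \lfloor (n-1)/2 \rfloor$, and simplifying $2 \cdot \binom{n}{n-2j-1} 2^{-(n-2j-1)} = \binom{n}{2j+1} 2^{-(n-2j-2)}$, the single-part expression becomes $\sum_j \binom{n}{2j+1} 2^{-(n-2j-2)} x^{2j+1}$. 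Summing over $i$ and using $\sum_i \lambda_i^{2j+1} = p_{2j+1}(\lambda)$ yields the claimed formula. This computation is entirely routine; the only thing requiring care is the parity bookkeeping and the exponent of $2$, and I do not anticipate any genuine obstacle.

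For the ``in particular'' assertions I would simply read off the extreme term $j = \lfloor (n-1)/2 \rfloor$. For $n = 2k-1$ this top term is $j = k-1$, contributing $\binom{2k-1}{2k-1} 2^{-(-1)} p_{2k-1} = 2 p_{2k-1}$; for $n = 2k$ it is again $j = k-1$, contributing $\binom{2k}{2k-1} 2^{0} p_{2k-1} = 2k\, p_{2k-1}$; all remaining terms involve $p_{2j-1}$ with $j < k$, and every coefficient $\binom{n}{2j+1} 2^{-(n-2j-2)}$ is manifestly a nonnegative rational. Finally, for the algebraic-basis claim I would invoke the triangular structure just exhibited: the formula for odd $n = 2k-1$ expresses $p_{2k-1}^{\mathrm{super}}(D(\cdot))$ as $2 p_{2k-1}$ plus a $\mathbb{Q}$-linear combination of $p_1, p_3, \dots, p_{2k-3}$. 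Because the odd power-sums $\{p_{2k-1}\}_{k \ge 1}$ form an algebraic basis of $\Gamma$ by definition, and this is an invertible (upper-triangular, with nonzero diagonal entries $2$) linear change among those generators, the family $\big\{p_{2k-1}^{\mathrm{super}}(D(\cdot))\big\}_{k \ge 1}$ generates the same $\mathbb{Q}$-algebra $\Gamma$ and remains algebraically independent, hence is itself an algebraic basis.
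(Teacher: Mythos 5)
Your argument is correct, and it reaches the identity by a genuinely different route than the paper. You compute $p_n^{\mathrm{super}}(D(\lambda))$ directly from its definition: substituting the Frobenius coordinates $a_i=\lambda_i+\tfrac12$, $b_i=\lambda_i-\tfrac12$ of $D(\lambda)$, expanding $\big(\lambda_i+\tfrac12\big)^n+(-1)^{n-1}\big(\lambda_i-\tfrac12\big)^n$ by the binomial theorem, and letting the parity cancellation kill the even-exponent terms; the bookkeeping $2\binom{n}{n-2j-1}2^{-(n-2j-1)}=\binom{n}{2j+1}2^{-(n-2j-2)}$ is right, and the extraction of the leading coefficients $2$ (for $n=2k-1$) and $2k$ (for $n=2k$) and the triangularity argument for the algebraic-basis claim are all sound. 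The paper instead works at the level of generating functions: it first proves the identity $\phi(z;D(\lambda))=\Phi\big(z-\tfrac12;\lambda\big)$ between the two rational functions encoding these quantities, then takes logarithms, expands $\big(z-\tfrac12\big)^{-(2k+1)}$ in powers of $z^{-1}$, and compares coefficients of $z^{-n}$ against $\log\phi(z;D(\lambda))=\sum_n \frac{1}{nz^n}p_n^{\mathrm{super}}(D(\lambda))$. The two computations are of essentially equal length, and yours is more elementary (no logarithms or series rearrangement). What the paper's route buys is the intermediate identity $\phi(z;D(\lambda))=\Phi\big(z-\tfrac12;\lambda\big)$, labelled \eqref{eq:phi_to_Phi}, which is reused later to derive Proposition~\ref{prop:G_to_Phi} and hence the explicit formula \eqref{eq:FC_to_Phi} for $\mathfrak{R}_{2k}$; if you adopt your proof, that identity would still need to be established separately for the later sections.
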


\begin{proof}
Using \eqref{eq:phi_Frobenius} and \eqref{eq:double_Frobenius}, we have
\begin{gather} \label{eq:phi_to_Phi}
\phi(z; D(\lambda))= \prod_{i=1}^{\ell(\lambda)} \frac{z+\big(\lambda_i-\frac{1}{2}\big)}{z-\big(\lambda_i+\frac{1}{2}\big)}
= \Phi\big(z- \tfrac{1}{2};\lambda\big).
\end{gather}
Taking the logarithm of the right hand side and expanding it at $z=\infty$,
\begin{gather*}
\log \Phi\big(z-\tfrac{1}{2}; \lambda\big)= \sum_{k=0}^\infty \frac{2p_{2k+1}(\lambda)}{2k+1} \big(z-\tfrac{1}{2}\big)^{-(2k+1)} \\
\hphantom{\log \Phi\big(z-\tfrac{1}{2}; \lambda\big)}{} = \sum_{k=0}^\infty
\frac{2p_{2k+1}(\lambda)}{2k+1} \frac{1}{z^{2k+1}}
\sum_{j=0}^\infty \frac{(2k+1)(2k+2) \cdots(2k+j)}{j!} \frac{1}{(2z)^j}.
\end{gather*}
Changing variable $j \mapsto n=2k+j+1$, we have
\begin{gather*}
\log \Phi\big(z-\tfrac{1}{2}; \lambda\big)= \sum_{n=1}^\infty \frac{1}{z^n}\sum_{k=0}^{\lfloor (n-1)/2 \rfloor}
2p_{2k+1}(\lambda) \frac{(2k+2) (2k+3)\cdots (n-1)}{(n-2k-1)!} \frac{1}{2^{n-2k-1}} \\
\hphantom{\log \Phi\big(z-\tfrac{1}{2}; \lambda\big)}{} = \sum_{n=1}^\infty \frac{1}{n z^n}
 \sum_{k=0}^{\lfloor (n-1)/2 \rfloor}
p_{2k+1}(\lambda)
\binom{n}{2k+1} \frac{1}{2^{n-2k-2}}.
\end{gather*}
On the other hand, from \eqref{eq:phi_Frobenius} we have
\begin{gather*}
\log \phi (z;D(\lambda)) = \sum_{n= 1}^\infty \frac{1}{n z^n} p_n^{\mathrm{super}}(D(\lambda)).
\end{gather*}
Comparing the coefficient of $z^{-n}$ in the above equations, we obtain the desired identity.
\end{proof}

\subsection{Moments of Rayleigh measures}

Recall the Rayleigh measure $\tau_{\lambda}$ defined in Section~\ref{subsec:Rayleigh_measures}.

\begin{Proposition} \label{prop:spin_Rayleigh} For a strict partition $\lambda$ and $k=1,2,3,\dots$, we have
\begin{gather*}
M_{2k}[\tau_{D(\lambda)}]= 4k p_{2k-1}(\lambda)+ \cdots \qquad \text{and} \qquad M_{2k+1}[\tau_{D(\lambda)}]=2k(2k+1) p_{2k-1}(\lambda)+ \cdots,
\end{gather*}
where dots are linear combinations of $\{p_{2j-1}(\lambda)\}_{j=1,2,\dots,k-1}$ with $\bQ_{\ge 0}$-coefficients. Therefore the family $\{M_{2k}[\tau_{D(\cdot)}]\}_{k=1,2,\dots}$ is an algebraic basis of $\Gamma$.
\end{Proposition}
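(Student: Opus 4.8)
The plan is to obtain both identities by specializing Lemma~\ref{lem:ordinary_Rayleigh_super} to the double diagram $D(\lambda)$ and then substituting Proposition~\ref{prop:strict_super_p}. First I would replace $\lambda$ by $D(\lambda)$ in Lemma~\ref{lem:ordinary_Rayleigh_super} to get
\[
M_n[\tau_{D(\lambda)}]=\sum_{j=0}^{\lfloor (n/2)-1 \rfloor} \binom{n}{2j+1} 2^{-2j}\, p_{n-2j-1}^{\mathrm{super}}(D(\lambda)),
\]
and then expand each factor $p_{n-2j-1}^{\mathrm{super}}(D(\lambda))$ as a linear combination of the $p_{2i-1}(\lambda)$ via Proposition~\ref{prop:strict_super_p}. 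Because the coefficients $\binom{n}{2j+1}2^{-2j}$ in Lemma~\ref{lem:ordinary_Rayleigh_super} and the coefficients $\binom{m}{2i+1}2^{-(m-2i-2)}$ in Proposition~\ref{prop:strict_super_p} are all nonnegative, the resulting expression for $M_n[\tau_{D(\lambda)}]$ is automatically a linear combination of $p_1(\lambda),p_3(\lambda),\dots$ with $\bQ_{\ge 0}$-coefficients, which already settles the positivity requirement for the ``dots''.

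To pin down the leading coefficients I would track the highest odd power sum that can occur. By Proposition~\ref{prop:strict_super_p}, the top term of $p_m^{\mathrm{super}}(D(\lambda))$ is $p_m(\lambda)$ when $m$ is odd and $p_{m-1}(\lambda)$ when $m$ is even; in both of our cases $m=n-2j-1$ this top index equals $2k-2j-1$, which is maximal precisely at $j=0$. Hence the only summand that can contribute the leading power sum $p_{2k-1}(\lambda)$ is the $j=0$ term $n\,p_{n-1}^{\mathrm{super}}(D(\lambda))$. For $n=2k$ the ``in particular'' part of Proposition~\ref{prop:strict_super_p} gives $p_{2k-1}^{\mathrm{super}}(D(\lambda))=2p_{2k-1}(\lambda)+\cdots$, so the coefficient of $p_{2k-1}(\lambda)$ is $2k\cdot 2=4k$; for $n=2k+1$ it gives $p_{2k}^{\mathrm{super}}(D(\lambda))=2k\,p_{2k-1}(\lambda)+\cdots$, so the coefficient is $(2k+1)\cdot 2k=2k(2k+1)$. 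All remaining summands involve only $p_{2i-1}(\lambda)$ with $i<k$, which gives exactly the two displayed formulas.

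For the final assertion I would invoke the triangular structure just obtained. The computation shows that $M_{2k}[\tau_{D(\cdot)}]$ is a linear combination of $p_1,p_3,\dots,p_{2k-1}$ in which $p_{2k-1}$ appears with the nonzero coefficient $4k$ and no higher power sum appears. Since $\{p_{2k-1}\}_{k\ge 1}$ is an algebraic basis of $\Gamma$, this change of generators is lower triangular with nonvanishing diagonal, hence invertible; inverting it expresses each $p_{2k-1}$ as a linear combination of $M_2[\tau_{D(\cdot)}],\dots,M_{2k}[\tau_{D(\cdot)}]$, so $\{M_{2k}[\tau_{D(\cdot)}]\}_{k\ge1}$ generates $\Gamma$ and is algebraically independent, i.e., an algebraic basis. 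The only place demanding genuine care is the bookkeeping in the second step---verifying that the top-degree power sum arises from a single summand and correctly multiplying out the two leading coefficients---while every other step is a direct combination of the two preceding results.
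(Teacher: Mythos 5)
Your proposal is correct and follows essentially the same route as the paper: specialize Lemma~\ref{lem:ordinary_Rayleigh_super} to $D(\lambda)$, substitute Proposition~\ref{prop:strict_super_p}, and read off the leading coefficients $2k\cdot 2=4k$ and $(2k+1)\cdot 2k$ from the $j=0$ term, with positivity inherited from the nonnegative coefficients in both ingredients. The paper's proof is just a terser version of the same argument, so there is nothing to add.
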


\begin{proof}From Lemma \ref{lem:ordinary_Rayleigh_super} we have $M_{n}[\tau_{D(\lambda)}]=n p^{\mathrm{super}}_{n-1}(D(\lambda))+ \cdots$, where dots are a linear combination of $p^{\mathrm{super}}_{j}(D(\lambda))$ $(j=1,2,\dots,n-2)$, with $\bQ_{ \ge 0}$-coefficients. The desired claim follows from Proposition~\ref{prop:strict_super_p} immediately.
\end{proof}

\subsection{Moments and free cumulants of transition measures}

Recall the transition measure $\mathfrak{m}_\lambda$ defined in Section~\ref{subsec:transition_measures}.

\begin{Proposition}For strict partition $\lambda$ and $k=1,2,3,\dots$, we have
\begin{gather*}
M_{2k}[\mathfrak{m}_{D(\lambda)}] =2 p_{2k-1}(\lambda)+ \cdots \qquad \text{and} \qquad M_{2k+1}[\mathfrak{m}_{D(\lambda)}] =2k p_{2k-1}(\lambda)+ \cdots,
\end{gather*}
where dots are functions in $\Gamma$ of degree at most $2k-2$ with $\bQ_{\ge 0}$-coefficients. Therefore the family $\{M_{2k}[\mathfrak{m}_{D(\cdot)}]\}_{k=1,2,\dots}$ is an algebraic basis of $\Gamma$.
\end{Proposition}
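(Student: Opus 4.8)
The plan is to deduce this proposition directly from the preceding Proposition~\ref{prop:spin_Rayleigh} together with Lemma~\ref{lem:transition_Rayleigh}, which expresses the moments of the transition measure $\mathfrak{m}_\lambda$ in terms of the moments of the Rayleigh measure $\tau_\lambda$. Applying that lemma to the double diagram $D(\lambda)$ gives
\begin{gather*}
M_n[\mathfrak{m}_{D(\lambda)}] = \sum_{\nu=(\nu_1,\nu_2,\dots)\vdash n} z_\nu^{-1} M_{\nu_1}[\tau_{D(\lambda)}] M_{\nu_2}[\tau_{D(\lambda)}] \cdots.
\end{gather*}
Since $M_1[\tau_{D(\lambda)}]=0$ (by the remark after the definition of $\tau_\lambda$), all parts $\nu_i$ appearing with a nonzero contribution are at least $2$. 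First I would determine which partitions $\nu$ of $n$ can contribute to the top-degree term. Each factor $M_m[\tau_{D(\lambda)}]$ has degree $m-1$ in $\Gamma$ by Proposition~\ref{prop:spin_Rayleigh} (as $M_m$ is a combination of $p_{2j-1}$ with $2j-1 \le m-1$), so the product over parts of $\nu$ has degree $\sum_i (\nu_i - 1) = n - \ell(\nu)$. This is maximized when $\ell(\nu)$ is smallest, i.e.\ when $\nu=(n)$ is the single-row partition, which alone achieves degree $n-1$; every other $\nu$ has strictly smaller degree. Hence the leading behavior of $M_n[\mathfrak{m}_{D(\lambda)}]$ is governed entirely by the $\nu=(n)$ term, whose coefficient is $z_{(n)}^{-1}=1/n$.

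With this reduction, I would split into the even and odd cases. For $n=2k$, the dominant term is $\frac{1}{n}M_{2k}[\tau_{D(\lambda)}] = \frac{1}{2k}\bigl(4k\, p_{2k-1}(\lambda)+\cdots\bigr) = 2 p_{2k-1}(\lambda)+\cdots$, using the first formula in Proposition~\ref{prop:spin_Rayleigh}; this yields the claimed leading coefficient $2$. For $n=2k+1$, the dominant term is $\frac{1}{2k+1}M_{2k+1}[\tau_{D(\lambda)}] = \frac{1}{2k+1}\bigl(2k(2k+1)p_{2k-1}(\lambda)+\cdots\bigr) = 2k\, p_{2k-1}(\lambda)+\cdots$, matching the second claimed coefficient. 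In each case the ``$\cdots$'' absorbs both the lower-degree tail of the $\nu=(n)$ term and the entire contribution of all other $\nu$, all of which have degree at most $2k-2$.

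The part requiring genuine care will be the tracking of the positivity of coefficients, namely that all of the ``$\cdots$'' terms are $\mathbb{Q}_{\ge 0}$-linear combinations of the $p_{2j-1}(\lambda)$. The good news is that $z_\nu^{-1}>0$ for every $\nu$, and Proposition~\ref{prop:spin_Rayleigh} guarantees that each factor $M_m[\tau_{D(\lambda)}]$ is itself a nonnegative combination of the $p_{2j-1}(\lambda)$ (and these carry even degree difference so only odd power-sums survive); since products and nonnegatively-weighted sums of nonnegative combinations remain nonnegative, the positivity propagates through the sum without obstruction. I would then close by noting, exactly as in the proof of Proposition~\ref{prop:spin_Rayleigh}, that the triangularity of the family $\{M_{2k}[\mathfrak{m}_{D(\cdot)}]\}_{k\ge 1}$ with respect to the algebraic basis $\{p_{2k-1}\}_{k\ge 1}$ of $\Gamma$ --- the leading term of $M_{2k}[\mathfrak{m}_{D(\cdot)}]$ being $2p_{2k-1}$ plus lower-degree terms --- immediately shows that $\{M_{2k}[\mathfrak{m}_{D(\cdot)}]\}_{k\ge 1}$ is itself an algebraic basis of $\Gamma$.
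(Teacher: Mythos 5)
Your overall strategy is the paper's: expand $M_n[\mathfrak{m}_{D(\lambda)}]$ via Lemma~\ref{lem:transition_Rayleigh}, isolate the $\nu=(n)$ term with coefficient $1/n$, and feed in Proposition~\ref{prop:spin_Rayleigh}; the leading coefficients, the positivity bookkeeping, and the algebraic-basis conclusion are all fine. However, your degree estimate has a genuine gap in the odd case $n=2k+1$. You bound $\deg M_m[\tau_{D(\lambda)}]$ by $m-1$ uniformly, so a partition $\nu\vdash 2k+1$ with $\ell(\nu)=2$ is only bounded by degree $n-\ell(\nu)=2k-1$, which is the same as the degree of the leading term $p_{2k-1}$ --- not ``strictly smaller'', and in particular not the claimed $2k-2$. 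For instance $\nu=(2k-1,2)$ gets the bound $(2k-2)+1=2k-1$ from your argument. Since the statement asserts the remainder has degree at most $2k-2$, and this precise bound is what the proof of Theorem~\ref{thm:MainThm} later relies on, the shortfall of $1$ is not cosmetic. (Relatedly, for $\nu=(n)$ with $n=2k+1$ odd, the term $\frac{1}{n}M_n[\tau_{D(\lambda)}]$ has degree $n-2=2k-1$, not $n-1$ as you wrote.)

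The repair is exactly the extra observation in the paper's proof: Proposition~\ref{prop:spin_Rayleigh} gives the sharper value $\deg M_m[\tau_{D(\lambda)}]=m-2$ when $m$ is \emph{odd} (its top term is $p_{m-2}$), versus $m-1$ when $m$ is even. Any $\nu\vdash 2k+1$ with $\nu_1<2k+1$ has at least two parts and, since its parts sum to an odd number, at least one odd part; hence
\begin{gather*}
\deg \bigl(M_{\nu_1}[\tau_{D(\lambda)}]M_{\nu_2}[\tau_{D(\lambda)}]\cdots\bigr)\le (2k+1)-\ell(\nu)-1\le 2k-2 .
\end{gather*}
Your argument as written does suffice for the even case $n=2k$, where the $\nu=(n)$ term genuinely has degree $n-1=2k-1$ and every other term has degree at most $n-\ell(\nu)\le 2k-2$.
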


\begin{proof}
Lemma \ref{lem:transition_Rayleigh} implies that
\begin{gather} \label{eq:strict_transition_expansion}
M_{n}[\mathfrak{m}_{D(\lambda)}]= \frac{1}{n} M_{n}[\tau_{D(\lambda)}] +
\sum_{\begin{subarray}{c} \nu =(\nu_1,\nu_2,\dots)\vdash n \\
\nu_1 < n \end{subarray}} z_\nu^{-1} M_{\nu_1}[\tau_{D(\lambda)}] M_{\nu_2}[\tau_{D(\lambda)}] \cdots.
\end{gather}
We here apply Proposition \ref{prop:spin_Rayleigh}. Let $n=2k+1$. Then $\frac{1}{n}M_{n}[\tau_{D(\lambda)}]= 2kp_{2k-1}(\lambda)+ f_k(\lambda)$, where $f_k \!\in\! \Gamma$ is of degree at most $2k{-}2$. We will estimate the degree of each term $M_{\nu_1}[\tau_{D(\lambda)}] M_{\nu_2}[\tau_{D(\lambda)}] {\cdots}\!$ in~\eqref{eq:strict_transition_expansion}. A~partition $\nu \vdash 2k+1$ with $\nu_1 <2k+1$ has at least one odd part $\nu_s$ with $\deg M_{\nu_s}[\tau_{D(\lambda)}] =\nu_s-2$ and has another part $\nu_t$ $(t \not=s)$
with $\deg M_{\nu_t}[\tau_{D(\lambda)}] \le \nu_t-1$. Hence the degree of each term $M_{\nu_1}[\tau_{D(\lambda)}]
M_{\nu_2}[\tau_{D(\lambda)}] \cdots$ is at most $(2k+1)-3=2k-2$. The case with $n=2k$ is similar (and easier).
\end{proof}

Now using the free cumulant-moment formula \eqref{eq:CM_formula}, we obtain the following result for free cumulants $R_n(D(\lambda))=R_n[\mathfrak{m}_{D(\lambda)}]$.

\begin{Corollary} \label{cor:free_cumulant}
For strict partition $\lambda$ and $k=1,2,3,\dots$, we have
\begin{gather*}
R_{2k}(D(\lambda)) =2 p_{2k-1}(\lambda)+ \cdots \qquad \text{and} \qquad R_{2k+1}(D(\lambda)) =2k p_{2k-1}(\lambda)+ \cdots,
\end{gather*}
where dots are functions in $\Gamma$ of degree at most $2k-2$ $($with $\bQ$-coefficients$)$. Therefore the family $\{R_{2k}(D(\cdot))\}_{k=1,2,\dots}$ is an algebraic basis of $\Gamma$.
\end{Corollary}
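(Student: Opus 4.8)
The plan is to feed the moment estimates from the previous proposition into the free cumulant--moment formula \eqref{eq:CM_formula} applied to $\mu=\mathfrak{m}_{D(\lambda)}$, namely
\[
R_n(D(\lambda)) = M_n[\mathfrak{m}_{D(\lambda)}] + \sum_{\substack{\nu=(\nu_1,\nu_2,\dots)\vdash n\\ \nu_1<n}} c_\nu\, M_{\nu_1}[\mathfrak{m}_{D(\lambda)}] M_{\nu_2}[\mathfrak{m}_{D(\lambda)}] \cdots,
\]
with $c_\nu\in\mathbb{Z}$. Because the $c_\nu$ are integers and each moment lies in $\Gamma$ by the previous proposition, the whole expression lies in $\Gamma$ with rational coefficients; this is exactly the $\mathbb{Q}$-coefficient assertion, and one should not expect nonnegativity here since the $c_\nu$ may be negative.

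The heart of the matter is a degree count showing that every correction term has degree at most $2k-2$, so that the leading term of $R_n(D(\lambda))$ is inherited from $M_n[\mathfrak{m}_{D(\lambda)}]$. First I would discard any $\nu$ having a part equal to $1$: such a term contains the factor $M_1[\mathfrak{m}_{D(\lambda)}]=R_1(D(\lambda))=0$ and hence vanishes. For the surviving $\nu$ all parts are at least $2$, and the previous proposition gives $\deg M_{\nu_i}[\mathfrak{m}_{D(\lambda)}]=\nu_i-1$ if $\nu_i$ is even and $\nu_i-2$ if $\nu_i$ is odd. Writing $\ell(\nu)$ for the length and $o(\nu)$ for the number of odd parts, this yields
\[
\deg\big(M_{\nu_1}[\mathfrak{m}_{D(\lambda)}] M_{\nu_2}[\mathfrak{m}_{D(\lambda)}] \cdots\big) = n - \ell(\nu) - o(\nu).
\]

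For $n=2k$ the condition $\nu_1<n$ forces $\ell(\nu)\ge 2$, so the degree is at most $n-2=2k-2$, strictly below $\deg M_{2k}[\mathfrak{m}_{D(\lambda)}]=2k-1$; hence the leading term of $R_{2k}(D(\lambda))$ is that of $M_{2k}[\mathfrak{m}_{D(\lambda)}]$, namely $2p_{2k-1}(\lambda)$. For $n=2k+1$ we again have $\ell(\nu)\ge 2$, and since $n$ is odd the number of odd parts is odd, so $o(\nu)\ge 1$; thus the degree is at most $n-3=2k-2$, strictly below $\deg M_{2k+1}[\mathfrak{m}_{D(\lambda)}]=2k-1$, and the leading term of $R_{2k+1}(D(\lambda))$ is $2k\,p_{2k-1}(\lambda)$. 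The only real point is this parity observation for odd $n$, which is precisely the mechanism already used in the proof of the moment proposition; so I expect no genuine obstacle, only bookkeeping.

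Finally, the algebraic-basis claim follows by triangularity. Each $R_{2k}(D(\cdot))$ equals $2p_{2k-1}$ plus a polynomial in $p_1,p_3,\dots,p_{2k-3}$ (every element of $\Gamma$ of degree at most $2k-2$ is such a polynomial), so by induction on $k$ every $p_{2k-1}$ is a polynomial in $R_2(D(\cdot)),\dots,R_{2k}(D(\cdot))$. Since $\{p_{2k-1}\}_{k\ge1}$ is an algebraic basis of $\Gamma$, the family $\{R_{2k}(D(\cdot))\}_{k\ge1}$ generates $\Gamma$ and, being the image of an algebraic basis under a degree-triangular change of generators, is itself an algebraic basis.
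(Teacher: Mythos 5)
Your argument is correct and is exactly the route the paper intends: the paper states the corollary as an immediate consequence of the cumulant--moment formula \eqref{eq:CM_formula} applied to the preceding proposition on $M_n[\mathfrak{m}_{D(\lambda)}]$, and your degree count $n-\ell(\nu)-o(\nu)$ (together with discarding parts equal to $1$ via $M_1[\mathfrak{m}_{D(\lambda)}]=0$ and the parity observation $o(\nu)\ge 1$ for odd $n$) is precisely the bookkeeping the paper leaves implicit, mirroring the argument it spells out for the moments. The triangularity argument for the algebraic-basis claim is likewise the standard one the paper relies on.
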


\subsection{Proof of main theorem}\label{subsec:spin_Kerov}

For each $k=1,2,3,\dots$ and $\lambda \in \mathcal{SP}$, we set
\begin{gather*}
\mathfrak{R}_{2k}(\lambda)= \frac{1}{2} R_{2k} (D(\lambda)).
\end{gather*}
The degree of $\mathfrak{R}_{2k}$ in $\Gamma$ is $2k-1$ as we proved in Corollary~\ref{cor:free_cumulant}.

\begin{proof}[Proof of Theorem~\ref{thm:MainThm}] As we saw in~\eqref{eq:spinCh_p1}, the difference $\mathfrak{p}_{2k-1}-p_{2k-1}$ belongs to $\mathbb{Q}[p_1,p_3,$ $\dots,p_{2k-3}]$ and has degree at most $2k-2$.
So does the difference $\mathfrak{R}_{2k}-p_{2k-1}$ by Corollary \ref{cor:free_cumulant}. Therefore
\begin{gather*}
\mathfrak{p}_{2k-1} - \mathfrak{R}_{2k} =(\mathfrak{p}_{2k-1}-p_{2k-1}) -(\mathfrak{R}_{2k} -p_{2k-1})
\end{gather*}
belongs to $\mathbb{Q}[p_1,p_3,\dots,p_{2k-3}]$ and is of degree at most $2k-2$. Again, using Corollary~\ref{cor:free_cumulant}, we see that $\mathbb{Q}[p_1,p_3,\dots,p_{2k-3}] =\mathbb{Q}[\mathfrak{R}_2,
\mathfrak{R}_4,\dots, \mathfrak{R}_{2k-2}]$. Hence $\mathfrak{p}_{2k-1} - \mathfrak{R}_{2k}$ is a polynomial in
$\mathfrak{R}_2, \mathfrak{R}_4,\dots, \mathfrak{R}_{2k-2}$ of degree at most $2k-2$.
\end{proof}

\subsection{Computations for spin Kerov polynomials}

In this subsection, we explain how to obtain formulas in Example~\ref{ex:SpinKerov}. We use Okounkov's idea which is employed in~\cite{Biane2003} for ordinary Kerov polynomials. Recall the function $G_{\mathfrak{m}_\lambda}(z)$ defined in Section~\ref{subsec:transition_measures}, which is the Cauchy transform of the transition measure associated with~$\lambda$.

\begin{Proposition} \label{prop:G_to_Phi}For each strict partition $\lambda$, we have
\begin{gather*}
G_{\mathfrak{m}_{D(\lambda)}}(z) = \frac{\Phi(z-1;\lambda)}{z\Phi(z;\lambda)}.
\end{gather*}
\end{Proposition}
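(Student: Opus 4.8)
The plan is to combine the two ingredients already established in the excerpt: Lemma~\ref{lem:G_to_phi}, which expresses the Cauchy transform of a transition measure through the function $\phi$ attached to a Young diagram, and the identity \eqref{eq:phi_to_Phi}, which identifies $\phi(z;D(\lambda))$ with a half-unit shift of $\Phi(\cdot\,;\lambda)$. The key observation is that $D(\lambda)$ is an ordinary (modified Frobenius) partition, so Lemma~\ref{lem:G_to_phi} applies verbatim with $\lambda$ there replaced by $D(\lambda)$, and no separate argument is needed for the ``double'' case.

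First I would apply Lemma~\ref{lem:G_to_phi} to the partition $D(\lambda)$, giving
\begin{gather*}
z\, G_{\mathfrak{m}_{D(\lambda)}}(z) = \frac{\phi\big(z-\tfrac{1}{2};D(\lambda)\big)}{\phi\big(z+\tfrac{1}{2};D(\lambda)\big)}.
\end{gather*}
Next I would substitute the relation \eqref{eq:phi_to_Phi}, namely $\phi(w;D(\lambda))=\Phi\big(w-\tfrac{1}{2};\lambda\big)$, at the two values $w=z-\tfrac{1}{2}$ and $w=z+\tfrac{1}{2}$. The numerator becomes $\Phi(z-1;\lambda)$ and the denominator becomes $\Phi(z;\lambda)$, so after dividing both sides by $z$ one reads off the claimed formula.

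The whole computation is mechanical once these two facts are invoked; the only point demanding any attention is the bookkeeping of the half-integer shifts, i.e.\ checking that $\big(z-\tfrac{1}{2}\big)-\tfrac{1}{2}=z-1$ and $\big(z+\tfrac{1}{2}\big)-\tfrac{1}{2}=z$, which is exactly what places the arguments of $\Phi$ where the statement requires. There is no substantive obstacle beyond this substitution. Throughout, both sides are to be understood as Laurent series at $z=\infty$, where the generating-function identities \eqref{eq:phi_Frobenius} and \eqref{eq:Phi_definition2} underlying $\phi$ and $\Phi$ are valid, so the manipulation is justified at the level of formal (and convergent, for $|z|\gg1$) power series.
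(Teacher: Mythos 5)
Your proof is correct and is exactly the paper's argument: the paper's proof is the one-line remark that the proposition is immediate from Lemma~\ref{lem:G_to_phi} applied to $D(\lambda)$ together with the identity~\eqref{eq:phi_to_Phi}, and your substitution of the half-integer shifts is precisely the bookkeeping the paper leaves implicit.
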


\begin{proof}Immediate from Lemma~\ref{lem:G_to_phi} and the identity~\eqref{eq:phi_to_Phi}.
\end{proof}

\begin{Proposition}For a strict partition $\lambda$ and each $k=1,2,\dots$, we have
\begin{gather} \label{eq:FC_to_Phi}
\mathfrak{R}_{2k}(\lambda) = -\frac{1}{2(2k-1)} \big[z^{-2k}\big]\left(\frac{\Phi(z;\lambda)}{\Phi(z-1;\lambda)} \right)^{2k-1}.
\end{gather}
\end{Proposition}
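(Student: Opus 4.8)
The plan is to start from Biane's formula \eqref{eq:Biane_FC} for free cumulants, applied to the double diagram $D(\lambda)$, and then substitute the expression for $G_{\mathfrak{m}_{D(\lambda)}}(z)$ from Proposition~\ref{prop:G_to_Phi}. Concretely, setting $k \mapsto 2k$ in \eqref{eq:Biane_FC} gives
\begin{gather*}
R_{2k}(D(\lambda)) = -\frac{1}{2k-1} \big[z^{-1}\big] G_{\mathfrak{m}_{D(\lambda)}}(z)^{-(2k-1)}.
\end{gather*}
Since $\mathfrak{R}_{2k}(\lambda) = \frac{1}{2} R_{2k}(D(\lambda))$ by definition, this already accounts for the prefactor $-\frac{1}{2(2k-1)}$. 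The remaining task is purely to simplify the coefficient extraction $\big[z^{-1}\big] G_{\mathfrak{m}_{D(\lambda)}}(z)^{-(2k-1)}$ into the form appearing on the right-hand side of \eqref{eq:FC_to_Phi}.

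Next I would invert the formula from Proposition~\ref{prop:G_to_Phi}: since $G_{\mathfrak{m}_{D(\lambda)}}(z) = \frac{\Phi(z-1;\lambda)}{z\,\Phi(z;\lambda)}$, raising to the power $-(2k-1)$ yields
\begin{gather*}
G_{\mathfrak{m}_{D(\lambda)}}(z)^{-(2k-1)} = z^{2k-1}\left(\frac{\Phi(z;\lambda)}{\Phi(z-1;\lambda)}\right)^{2k-1}.
\end{gather*}
The key observation is then that extracting the coefficient of $z^{-1}$ in $z^{2k-1} Q(z)$ is the same as extracting the coefficient of $z^{-2k}$ in $Q(z)$, where $Q(z) = \big(\Phi(z;\lambda)/\Phi(z-1;\lambda)\big)^{2k-1}$. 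This shift of the power of $z$ under the coefficient functional is the single substantive manipulation, and it converts $\big[z^{-1}\big]$ into $\big[z^{-2k}\big]$ exactly as required.

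Assembling these pieces gives
\begin{gather*}
\mathfrak{R}_{2k}(\lambda) = \frac{1}{2} R_{2k}(D(\lambda)) = -\frac{1}{2(2k-1)} \big[z^{-1}\big]\, z^{2k-1}\left(\frac{\Phi(z;\lambda)}{\Phi(z-1;\lambda)}\right)^{2k-1} = -\frac{1}{2(2k-1)} \big[z^{-2k}\big]\left(\frac{\Phi(z;\lambda)}{\Phi(z-1;\lambda)}\right)^{2k-1},
\end{gather*}
which is precisely \eqref{eq:FC_to_Phi}. I expect no real obstacle here, since the argument chains together three facts already established in the excerpt; the only point demanding a little care is verifying that the Laurent expansion at $z=\infty$ is legitimate, i.e., that $\Phi(z;\lambda)/\Phi(z-1;\lambda)$ is a well-defined power series in $1/z$ with the correct leading behaviour so that the coefficient functional $\big[z^{-2k}\big]$ makes sense. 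Since $\Phi(z;\lambda) = \prod_i \frac{z+\lambda_i}{z-\lambda_i} \to 1$ as $z \to \infty$, the ratio $\Phi(z;\lambda)/\Phi(z-1;\lambda)$ tends to $1$, and its $(2k-1)$-th power admits an expansion of the form $1 + O(1/z)$, so the coefficient of $z^{-2k}$ is well-defined and the shift-of-index step is justified.
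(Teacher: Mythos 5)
Your argument is correct and is exactly the paper's proof: apply Biane's formula \eqref{eq:Biane_FC} to $D(\lambda)$, use $\mathfrak{R}_{2k}=\tfrac{1}{2}R_{2k}(D(\lambda))$, and substitute Proposition~\ref{prop:G_to_Phi}, with the index shift $\big[z^{-1}\big]z^{2k-1}Q(z)=\big[z^{-2k}\big]Q(z)$ made explicit. The paper compresses this into two sentences; your version only adds the (harmless and correct) check that the Laurent expansion at $z=\infty$ is well defined.
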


\begin{proof}Using \eqref{eq:Biane_FC} we have
\begin{gather*}
\mathfrak{R}_{2k}(\lambda)= \frac{1}{2} R_{2k}(D(\lambda))= -\frac{1}{2(2k-1)} \big[z^{-1}\big] G_{\mathfrak{m}_{D(\lambda)}} (z)^{-(2k-1)}.
\end{gather*}
The desired expression follows immediately from Proposition~\ref{prop:G_to_Phi}.
\end{proof}

The expression \eqref{eq:FC_to_Phi} can be rewritten by~\eqref{eq:Phi_definition2} as
\begin{gather*}
\mathfrak{R}_{2k}(\lambda) = -\frac{1}{2(2k-1)} \big[z^{-2k}\big] \left\{
 \exp \left((2k-1) \sum_{j=1}^\infty \frac{2p_{2j-1}(\lambda)}{2j-1} z^{-(2j-1)}\right) \right.\\
\left.\hphantom{\mathfrak{R}_{2k}(\lambda) =}{} \times \exp \left(-(2k-1) \sum_{j=1}^\infty \frac{2p_{2j-1}(\lambda)}{2j-1} (z-1)^{-(2j-1)}\right) \right\}.
\end{gather*}
Using this with computer, we can obtain expansions of $\mathfrak{R}_{2k}$ in terms of power-sums.

\begin{Example} \label{ex:SpinFreeCummulants}
\begin{gather*}
\mathfrak{R}_{2} = p_1, \\
\mathfrak{R}_4= p_3-3p_1^2+p_1, \\
\mathfrak{R}_6 = p_5-10p_3 p_1 + \frac{10}{3} p_3 + \frac{50}{3} p_1^3 -15 p_1^2 +p_1, \\
\mathfrak{R}_8 = p_7 -14 p_5 p_1 -7 p_3^2+7 p_5 +98 p_3 p_1^2 - \frac{266}{3} p_3 p_1 -\frac{343}{3} p_1^4+ 7 p_3 +196 p_1^3 -35 p_1^2 +p_1, \\
\mathfrak{R}_{10} = p_9 -18 p_7 p_1 -18 p_5 p_3 +12 p_7 +162 p_5 p_1^2 +162 p_3^2 p_1- 198 p_5 p_1 -96 p_3^2 -972 p_3 p_1^3 \\
\hphantom{\mathfrak{R}_{10} =}{} +\frac{126}{5} p_5 +1674 p_3 p_1^2 +\frac{4374}{5} p_1^5 -330 p_3 p_1 -2430 p_1^4 +12 p_3 +810 p_1^3 -63 p_1^2 +p_1.
\end{gather*}
\end{Example}

\begin{proof}[Proof of Example \ref{ex:SpinKerov}] It can be obtained by comparing Examples \ref{ex:SpinCharacter} and \ref{ex:SpinFreeCummulants}.
\end{proof}

\section{Another double diagrams} \label{sec:another_double}

In this section, we deal with another double diagram of a strict partition $\lambda$. It is considered in \cite{Ivanov2006} and De Stavola's thesis \cite{DarioThesis}. We denote it by $\mathtt{D}(\lambda)$ and call it the symmetrized double diagram. We do not give its explicit definition here, see \cite[Fig.~11]{DarioThesis}. The diagram $\mathtt{D}(\lambda)$ is not a Young diagram associated to any partition but we can extend notions in Section~\ref{sec:Preliminary} to such diagrams, see \cite[Chapter~2.2]{Hora}.

The diagram $\mathtt{D}(\lambda)$ is drawn in the Russian style and has Kerov's interlacing coordinate
\begin{gather*}
\mathtt{x}_{-m} <\mathtt{y}_{-m} < \mathtt{x}_{-1} <\mathtt{y}_{-1} < \mathtt{x}_{ 0} < \mathtt{y}_{1} <\mathtt{x}_{1} < \cdots< \mathtt{y}_{m} <\mathtt{x}_{m},
\end{gather*}
where $\mathtt{x}_{-i}= -\mathtt{x}_i$, $\mathtt{y}_{-i}= -\mathtt{y}_i$, and $\mathtt{x}_0=0$. Note that these numbers except $\mathtt{x}_0$ belong to $\mathbb{Z}+\frac{1}{2}$. The corresponding Kerov's transition measure $\mathfrak{m}_{\mathtt{D}(\lambda)}$ is characterized by its Cauchy transform
\begin{gather*}
G_{\mathfrak{m}_{\mathtt{D}(\lambda)}}(z) = \frac{\prod\limits_{i=1}^m (z-\mathtt{y}_{-i})(z-\mathtt{y}_{i})}
{(z-\mathtt{x}_0)\prod\limits_{i=1}^m (z-\mathtt{x}_{-i})(z-\mathtt{x}_i)}
= \frac{\prod\limits_{i=1}^m \big(z^2-\mathtt{y}_i^2\big)}{z \prod\limits_{i=1}^m \big(z^2-\mathtt{x}_i^2\big)}.
\end{gather*}

\begin{Lemma}[{\cite[Proposition 2.6]{Ivanov2006}}] For a strict partition $\lambda$, we have
\begin{gather*}
G_{\mathfrak{m}_{\mathtt{D}(\lambda)}}(z)= \frac{\Phi\big(z-\tfrac{1}{2};\lambda\big)}{z \Phi\big(z+\tfrac{1}{2};\lambda\big)}.
\end{gather*}
\end{Lemma}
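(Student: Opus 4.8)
The plan is to reduce the statement to the generalized form of Lemma~\ref{lem:G_to_phi}. Clearing the factor $z$, the asserted identity is equivalent to
\[
z\, G_{\mathfrak{m}_{\mathtt{D}(\lambda)}}(z) = \frac{\Phi\big(z-\tfrac12;\lambda\big)}{\Phi\big(z+\tfrac12;\lambda\big)},
\]
which is exactly the shape produced by Lemma~\ref{lem:G_to_phi}. Since $\mathtt{D}(\lambda)$ is not an honest Young diagram, I would first record that Lemma~\ref{lem:G_to_phi} extends verbatim to generalized diagrams (as in \cite[Chapter~2.2]{Hora}): writing $\phi(z;\mathtt{D}(\lambda))$ for the function attached to the generalized modified Frobenius coordinates of $\mathtt{D}(\lambda)$ through \eqref{eq:phi_Frobenius}, one has $z\,G_{\mathfrak{m}_{\mathtt{D}(\lambda)}}(z) = \phi\big(z-\tfrac12;\mathtt{D}(\lambda)\big)\big/\phi\big(z+\tfrac12;\mathtt{D}(\lambda)\big)$. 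The whole problem then collapses to the single identity $\phi(z;\mathtt{D}(\lambda)) = \Phi(z;\lambda)$, after which the conclusion follows as immediately as Proposition~\ref{prop:G_to_Phi} did from \eqref{eq:phi_to_Phi}.

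To prove $\phi(z;\mathtt{D}(\lambda))=\Phi(z;\lambda)$ I would read off the generalized Frobenius coordinates of $\mathtt{D}(\lambda)$ from its definition \cite[Fig.~11]{DarioThesis}. Because $\mathtt{D}(\lambda)$ is symmetric about the vertical axis (i.e.\ self-conjugate as a generalized diagram), its arm and leg coordinates coincide, and I expect them both to equal $\lambda_i$ for $i=1,\dots,\ell(\lambda)$; substituting into \eqref{eq:phi_Frobenius} then gives $\phi(z;\mathtt{D}(\lambda))=\prod_i \frac{z+\lambda_i}{z-\lambda_i}=\Phi(z;\lambda)$ by \eqref{eq:Phi_definition}. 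As a consistency check that is independent of the precise definition, note from \eqref{eq:Phi_definition} that $\Phi(-z;\lambda)=\Phi(z;\lambda)^{-1}$, so $\Phi\big(z-\tfrac12;\lambda\big)\big/\Phi\big(z+\tfrac12;\lambda\big)$ is invariant under $z\mapsto -z$, matching the evenness forced by $\mathtt{x}_{-i}=-\mathtt{x}_i$ and $\mathtt{y}_{-i}=-\mathtt{y}_i$.

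A second, more hands-on route avoids the generalized Frobenius machinery and works straight from the product formula for $G_{\mathfrak{m}_{\mathtt{D}(\lambda)}}(z)$ displayed before the lemma. Pairing each factor of $\Phi$ with its mirror image gives the clean factorization
\[
\frac{\Phi\big(z-\tfrac12;\lambda\big)}{\Phi\big(z+\tfrac12;\lambda\big)} = \prod_{i=1}^{\ell(\lambda)} \frac{z^2-\big(\lambda_i-\tfrac12\big)^2}{z^2-\big(\lambda_i+\tfrac12\big)^2},
\]
so the lemma becomes the claim that the positive local maxima and minima of $\mathtt{D}(\lambda)$ are $\mathtt{y}_i=\lambda_i-\tfrac12$ and $\mathtt{x}_i=\lambda_i+\tfrac12$ (together with the central minimum $\mathtt{x}_0=0$), understood as equality of rational functions after cancellation.

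The main obstacle is precisely this last identification of the interlacing coordinates: it is the only step that genuinely uses the definition of the symmetrized double diagram, and one must handle with care the cancellations that occur exactly when two parts of $\lambda$ differ by $1$ (then $\lambda_{i+1}+\tfrac12=\lambda_i-\tfrac12$, and the corresponding maximum and minimum annihilate each other, so that $m<\ell(\lambda)$ in general). Everything else — the factorization of $\Phi$, the evenness check, and the passage from $\phi$ to $G$ via Lemma~\ref{lem:G_to_phi} — is routine.
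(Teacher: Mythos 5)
The paper does not prove this lemma at all: it is imported wholesale as Proposition~2.6 of Ivanov's paper, with only the remark that it should be compared with Proposition~\ref{prop:G_to_Phi}. So there is no internal proof to measure yours against; the question is whether your reconstruction is sound, and in outline it is. Clearing the factor $z$ and pairing each factor of $\Phi$ with its mirror image, as you do, turns the claim into the rational-function identity $\prod_{i=1}^{m}\bigl(z^2-\mathtt{y}_i^2\bigr)\big/\prod_{i=1}^{m}\bigl(z^2-\mathtt{x}_i^2\bigr)=\prod_{i=1}^{\ell(\lambda)}\bigl(z^2-(\lambda_i-\tfrac12)^2\bigr)\big/\bigl(z^2-(\lambda_i+\tfrac12)^2\bigr)$, i.e.\ the statement that the positive minima and maxima of $\mathtt{D}(\lambda)$ are $\{\lambda_i+\tfrac12\}$ and $\{\lambda_i-\tfrac12\}$ after cancelling the coincidences $\lambda_{i+1}+\tfrac12=\lambda_i-\tfrac12$. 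That identification is correct (for $\lambda=(2,1)$ the surviving coordinates are $0,\pm\tfrac52$ and $\pm\tfrac12$, matching $\bigl(z^2-\tfrac14\bigr)\big/\bigl(z\bigl(z^2-\tfrac{25}{4}\bigr)\bigr)$), your parity check is apt, and so is your warning about cancellations. Equivalently, $\mathtt{D}(\lambda)$ is the centered self-conjugate generalized diagram with modified Frobenius coordinates $a_i=b_i=\lambda_i$, whence $\phi(z;\mathtt{D}(\lambda))=\Phi(z;\lambda)$ and the generalized form of Lemma~\ref{lem:G_to_phi} finishes the argument, exactly parallel to how Proposition~\ref{prop:G_to_Phi} follows from \eqref{eq:phi_to_Phi}.

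Two reservations. First, the step you yourself flag as the crux --- reading off the interlacing (or Frobenius) coordinates of $\mathtt{D}(\lambda)$ --- is left at the level of ``I expect''; since it is the entire content of the lemma and the paper deliberately withholds the definition of $\mathtt{D}(\lambda)$, a complete write-up must import that definition from Ivanov or De~Stavola and verify the coordinates there, after which nothing else in your argument needs repair. Second, on the first route: Lemma~\ref{lem:G_to_phi} is stated only for Young diagrams, and its naive extension to generalized profiles fails unless the profile is balanced, i.e.\ $\sum_i\mathtt{x}_i=\sum_j\mathtt{y}_j$ as in \eqref{eq:interlacing_relation} (a translate of a Young diagram already breaks the relation $zG(z)=\phi\bigl(z-\tfrac12\bigr)/\phi\bigl(z+\tfrac12\bigr)$). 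For $\mathtt{D}(\lambda)$ the symmetry $\mathtt{x}_{-i}=-\mathtt{x}_i$, $\mathtt{y}_{-i}=-\mathtt{y}_i$ guarantees balance, so the route is legitimate, but that hypothesis should be made explicit.
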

Compare this lemma with Proposition \ref{prop:G_to_Phi}. The remaining discussion is the same with that in the previous section, so we write only results.

For each $k=1,2,3,\dots$ and $\lambda \in \mathcal{SP}$, we set
\begin{gather*}
\mathtt{R}_{k}(\lambda)=\frac{1}{2} R_{k} [\mathfrak{m}_{\mathtt{D}(\lambda)}].
\end{gather*}
Then it has the expression
\begin{gather*}
\mathtt{R}_k(\lambda)= -\frac{1}{2(k-1)} \big[z^{-k}\big] \left( \frac{\Phi(z+\tfrac{1}{2};\lambda)}{\Phi\big(z-\tfrac{1}{2};\lambda\big)}\right)^{k-1} \\
\hphantom{\mathtt{R}_k(\lambda)}{} = -\frac{1}{2(k-1)} \big[z^{-k}\big] \left\{
\exp \left((k-1) \sum_{j=1}^\infty \frac{2p_{2j-1}(\lambda)}{2j-1} \big(z+\tfrac{1}{2}\big)^{-(2j-1)}\right)\right. \\
\left. \hphantom{\mathtt{R}_k(\lambda)=}{} \times \exp \left(-(k-1) \sum_{j=1}^\infty \frac{2p_{2j-1}(\lambda)}{2j-1} \big(z-\tfrac{1}{2}\big)^{-(2j-1)}\right)\right\}.
\end{gather*}
It is easy to see that $\mathtt{R}_k=0$ for odd $k$. We remark that $R_k(D(\lambda))$ does not vanish even if $k$ is odd.

\begin{Example}$\mathtt{R}_{2} =p_1$, $\mathtt{R}_{4} = p_3-3p_1^2+\frac{1}{4}p_1$.
\end{Example}

\begin{Proposition} \label{prop:Kerov2} For each $k=1,2,3,\dots$, the spin character $\mathfrak{p}_{2k-1}$ is a polynomial in $\mathtt{R}_2, \mathtt{R}_4,$ $\dots, \mathtt{R}_{2k}$ of the form
\begin{gather*}
\mathfrak{p}_{2k-1}= \mathtt{R}_{2k} + (\text{a polynomial in $\mathtt{R}_2,\mathtt{R}_4,\dots, \mathtt{R}_{2k-2}$}).
\end{gather*}
\end{Proposition}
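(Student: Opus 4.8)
The plan is to imitate the proof of Theorem~\ref{thm:MainThm} essentially line for line, since the only structural input needed is the triangular expansion of the family $\{\mathtt{R}_{2k}\}$ in terms of the odd power-sums. Concretely, I would first establish the analogue of Corollary~\ref{cor:free_cumulant}, namely
\begin{gather*}
\mathtt{R}_{2k}(\lambda) = p_{2k-1}(\lambda) + (\text{a function in } \Gamma \text{ of degree at most } 2k-2),
\end{gather*}
where the lower-order part is a polynomial in $p_1,p_3,\dots,p_{2k-3}$. Granting this, $\{\mathtt{R}_{2k}\}_{k\ge 1}$ is an algebraic basis of $\Gamma$ with $\deg\mathtt{R}_{2k}=2k-1$, and in particular $\mathbb{Q}[p_1,p_3,\dots,p_{2k-3}]=\mathbb{Q}[\mathtt{R}_2,\mathtt{R}_4,\dots,\mathtt{R}_{2k-2}]$.

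Since $\mathtt{D}(\lambda)$ is not the Young diagram of a partition, the super power-sum machinery behind Proposition~\ref{prop:strict_super_p} is not directly available, so I would instead read off the leading behaviour straight from the explicit formula for $\mathtt{R}_k$ recorded above. Writing $L(z)=\log\bigl(\Phi(z+\tfrac12;\lambda)/\Phi(z-\tfrac12;\lambda)\bigr)$ and expanding by \eqref{eq:Phi_definition2}, the symmetric difference of the two shifted logarithms annihilates all odd powers of $z^{-1}$ (this is also the source of $\mathtt{R}_k=0$ for odd $k$), leaving $L(z)=\sum_{m\ge 1}c_m z^{-2m}$ with $c_m=-2p_{2m-1}+(\text{degree}\le 2m-3)$, because $(z+\tfrac12)^{-m}-(z-\tfrac12)^{-m}=-mz^{-(m+1)}+O(z^{-(m+3)})$. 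Extracting $[z^{-2k}]\exp\bigl((2k-1)L(z)\bigr)$, the term linear in $L$ contributes $(2k-1)c_k=-2(2k-1)p_{2k-1}+\cdots$, while every product $c_{m_1}c_{m_2}\cdots$ with $\sum_i m_i=k$ and at least two factors has degree at most $\sum_i(2m_i-1)\le 2k-2$. Multiplying by the normalizing factor $-\tfrac{1}{2(2k-1)}$ then yields $\mathtt{R}_{2k}=p_{2k-1}+(\text{degree}\le 2k-2)$, as required.

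With the basis property in place the conclusion is immediate and identical to the end of the proof of Theorem~\ref{thm:MainThm}: by \eqref{eq:spinCh_p1} the spin character satisfies $\mathfrak{p}_{2k-1}=p_{2k-1}+(\text{a lower-degree polynomial in } p_1,p_3,\dots,p_{2k-3})$, and so does $\mathtt{R}_{2k}$ by the previous step. Hence $\mathfrak{p}_{2k-1}-\mathtt{R}_{2k}$ lies in $\mathbb{Q}[p_1,p_3,\dots,p_{2k-3}]$ and has degree at most $2k-2$; rewriting this algebra as $\mathbb{Q}[\mathtt{R}_2,\dots,\mathtt{R}_{2k-2}]$ gives exactly $\mathfrak{p}_{2k-1}=\mathtt{R}_{2k}+(\text{a polynomial in } \mathtt{R}_2,\mathtt{R}_4,\dots,\mathtt{R}_{2k-2})$.

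The one genuine verification, and the only place the argument could go wrong, is matching the leading coefficients so that the coefficient of $\mathtt{R}_{2k}$ comes out exactly $1$: one must confirm that the factor $-(2m-1)$ produced by the symmetric difference at $m=k$ cancels precisely against the normalization $-\tfrac{1}{2(2k-1)}$. Everything else is the same degree bookkeeping as in Section~\ref{sec:MainPart}, which is exactly why an integrality obstruction already surfaces at $k=2$: one computes $\mathfrak{p}_3=\mathtt{R}_4+\tfrac{7}{4}\mathtt{R}_2$, so this choice of free cumulants does not produce integer spin Kerov coefficients.
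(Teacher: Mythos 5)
Your proof is correct and follows the route the paper itself intends: the paper gives no written proof of this proposition, stating only that ``the remaining discussion is the same with that in the previous section,'' and your argument is a faithful execution of that plan, reducing everything to the triangularity $\mathtt{R}_{2k}=p_{2k-1}+(\text{degree}\le 2k-2)$ and then repeating the degree bookkeeping from the proof of Theorem~\ref{thm:MainThm}. Your one adaptation --- extracting the leading term directly from the recorded formula $\mathtt{R}_k=-\tfrac{1}{2(k-1)}\big[z^{-k}\big]\bigl(\Phi(z+\tfrac12;\lambda)/\Phi(z-\tfrac12;\lambda)\bigr)^{k-1}$ rather than re-running the super power-sum and Rayleigh-measure chain (which is not literally available since $\mathtt{D}(\lambda)$ is not a partition) --- is sound, and your leading-coefficient check $(2k-1)\cdot(-2)\cdot\bigl(-\tfrac{1}{2(2k-1)}\bigr)=1$ is exactly the verification needed.
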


\begin{Example}$\mathfrak{p}_{1}= \mathtt{R}_2$, $\mathfrak{p}_{3}= \mathtt{R}_4 +\frac{7}{4} \mathtt{R}_2$.
\end{Example}

From this example, we find that coefficients of polynomials in Proposition~\ref{prop:Kerov2} are \emph{not} integers. This indicates that these polynomials are ineligible for ``spin Kerov polynomials''. We thus believe that the $\mathfrak{R}_{2k}$ defined in Section~\ref{sec:MainPart} are the most appropriate choice for spin Kerov polynomials.

\subsection*{Acknowledgements}

The author acknowledges useful discussions with Valentin F\'{e}ray and Dario De Stavola. The research was supported by JSPS KAKENHI Grant Number 17K05281.

\pdfbookmark[1]{References}{ref}
\LastPageEnding

\end{document}